\numberwithin{equation}{section}
\theoremstyle{plain}
\newtheorem{Theorem}{Theorem}[section]
\newtheorem{Proposition}[Theorem]{Proposition}
\newtheorem{Lemma}[Theorem]{Lemma}
\newtheorem{Corollary}[Theorem]{Corollary}
\theoremstyle{definition}
\newtheorem{Definition}[Theorem]{Definition}
\theoremstyle{remark}
\newtheorem{Remark}{{\bf Remark}}
\newtheorem{Conjecture}{Conjecture}
\newcommand{\R}{{\mathbb R}}
\newcommand{\Z}{{\mathbb Z}}
\newcommand{\N}{{\mathbb N}}
\newcommand{\bary}{\mathop{\rm bar}\nolimits}
\newcommand{\diam}{\mathop{\rm diam}\nolimits}
\newcommand{\girth}{\mathrm{girth}} 
\newcommand{\Dirac}{\mathrm{Dirac}} 
\newcommand{\sbt}{\,\begin{picture}(-.5,1)(-.5,-2)\circle*{1.5}\end{picture}\ }
\newcommand{\hilbert}{\mathcal{H}}
\newcommand{\defineeq}{\stackrel{\mathrm{def}}{=}}
\begin{document}

\title{Fixed-point property for %uniformly Lipschitz 
affine actions on a Hilbert space}

%\author{Hiroyasu Izeki$\mbox{}^1$} 
%\thanks{$\mbox{}^1$Faculty of Science and Technology, Keio University, 
%3-14-1 Hiyoshi, Kohoku-ku, Yokohama, Kanagawa 223-8522, Japan, 
%izeki@math.keio.ac.jp}

%\author{Takefumi Kondo$\mbox{}^2$}
%\thanks{$\mbox{}^2$Mathematics and Computer Science, Graduate School of Scinece and Engineering, 
%Kagoshima University, 
%1-21-35 Korimoto, Kagoshima-city, Kagoshima, 890-0065, Japan, takefumi@sci.kagoshima-u.ac.jp}

\author{Shin Nayatani}%$\mbox{}^3$}%*}
\thanks{%$\mbox{}^3$
Graduate School of Mathematics, Nagoya University,
Chikusa-ku, Nagoya 464-8602, Japan, nayatani@math.nagoya-u.ac.jp}
%\thanks{* Corresponding author}
%\address{Graduate School of Mathematics, Nagoya University, Chikusa-ku, Nagoya 464-8602, Japan}
%\email{nayatani@math.nagoya-u.ac.jp}
%\thanks{Partly supported by the Grant-in-Aid
%for Scientific Research, The Ministry of Education,
%Science, Sports and Culture, Japan.}
%\subjclass[2010]{Primary~20F65; Secondary~58E20, 20P05.}
%\keywords{finitely generated group, random group, $\cat$ space, fixed-point property, 
%energy of map, Wang invariant, expander, Euclidean building}

%\date{}

\maketitle

\begin{abstract}
Gromov \cite{Gromov2} showed that for fixed, arbitrarily large $C$, 
any uniformly $C$-Lipschitz affine action of a random group in his graph model 
on a Hilbert space has a fixed point. 
We announce a theorem stating that more general affine actions of the same 
random group on a Hilbert space have a fixed point. 
We discuss some aspects of the proof. 
\end{abstract} 

\section*{Introduction} 
In \cite{IzekiKondoNayatani2}, Izeki, Kondo and the present author proved that a random group 
in the Gromov graph model had fixed-point property, meaning that any isometric action had 
a fixed point, for a large class of CAT($0$) spaces, by using the method which concerns 
the $n$-step energy of maps. 
Naor and Silberman \cite{NaorSilberman} proved a similar result for a class of $p$-uniformly 
convex geodesic metric spaces. 
(Note that CAT($0$) spaces are $2$-uniformly convex.) 
In these studies it seemed that the condition that actions 
%of discrete groups on metric spaces 
are isometric was essential and without the condition the argument should break dwon. 
Gromov \cite{Gromov2}, however, had shown that 
%associated with a sequence of expanders with diverging girth, 
any uniformly $C$-Lipschitz affine action of the same random group on a Hilbert space 
has a fixed point, where $C$ may be arbitrarily large but should be specified in advance. 
The purpose of this article is to announce a fixed-point theorem for more general affine 
actions of the same random group, 
%(but the relevant sequence of expanders should satisfy stronger condition), 
allowing the Lipschitz constants of the affine maps to have mild growth 
with respect to a certain length function on the group \cite{IzekiKondoNayataniprep}. 
%This, in particular, verifies the above mentioned result of Gromov. 
It is worth while to mention the following: if the Lipschitz constants of the affine maps are 
uniformly bounded, then the action reduces to an isometric one on a Banach space 
by replacing the Hilbert norm by an equivalent one. 
On the other hand, our case treats really non-isometric actions which cannot reduce 
to isometric ones. 

A key of the proof is to verify the existence of a discrete harmonic map from the group 
into the Hilbert space which is equivariant with respect to the given action. 
In the case of isometric actions, the method of energy minimization coupled with scaling 
ultralimit argument was effective. In the general affine case, this method fails because 
a map minimizing local energy does not necessarily satisfy the condition of harmonicity.  
We therefore employ the method of discrete tension-contracting flow due to Gromov \cite{Gromov2}. 
Indeed, we refine Gromov's method and derive the existence of a harmonic map still 
by coupling it with scaling ultralimit argument. 

%当面の課題として, この手法をさらに改良して, リプシッツ定数が群上の語距離そのものに関して緩やかに
%増大する場合に, 同じランダム群が固定点をもつことを証明できるか見極めたい. 

This article is organized as follows. 
In \S 1, we define affine action, discuss the rigidity of isometric actions and state 
Shalom's theorem on the rigidity and existence of uniformly Lipschitz affine actions. 
In \S 2, after discussing Nowak's fixed-point theorem for uniformly Lipschitz affine actions 
of a random group in the Gromov density model, we state our main fixed-point theorem. 
In \S 3, we discuss discrete harmonic maps and state an existence theorem for such maps. 
We also discuss the failure of the method of energy minimization. 
In \S 4, we introduce Gromov's discrete tension-contracting flow and outline the proof of the existence 
of harmonic maps. 
In \S 5, we outline the proof of the main theorem. 
In Appendix, we prove the existence of maps minimizing local energy which are equivariant 
with respect to a given affine action. 

\section{%Uniformly Lipschitz 
Affine actions on a Hilbert space} 
Let $\hilbert$ be a Hilbert space, and denote the algebra of all bounded linear operators 
of $\hilbert$ by $\mathbb{B}(\mathcal{H})$. 
Let $\Gamma$ be a finitely generated infinite group, and let 
$\rho\colon \Gamma\curvearrowright \hilbert$ be an affine action. 
Thus, for $\gamma\in \Gamma$, $\rho(\gamma)\colon \hilbert\rightarrow \hilbert$ 
has the form
$$
\rho(\gamma)(\mathbf{v}) = A(\gamma)(\mathbf{v}) + b(\gamma),\quad \mathbf{v}\in \hilbert, 
$$
where $A(\gamma)\in \mathbb{B}(\mathcal{H})$ and $b(\gamma)\in \mathcal{H}$. 
Since $\gamma\mapsto \rho(\gamma)$ is a homomorphism, we have 
$$
A(\gamma\gamma') = A(\gamma)A(\gamma'),\quad b(\gamma\gamma') = b(\gamma) + A(\gamma)b(\gamma'),
\quad \gamma,\gamma'\in \Gamma. 
$$
%$$A(\gamma\gamma')(\mathbf{v}) + b(\gamma\gamma') 
%= A(\gamma) ( A(\gamma')(\mathbf{v}) + b(\gamma') ) + b(\gamma)$$

\begin{Definition}
An affine action $\rho\colon \Gamma\curvearrowright \hilbert$ is called 
{\em uniformly $C$-Lipschitz} if 
$\rho(\gamma)\colon \mathcal{H}\rightarrow \mathcal{H}$ is a $C$-Lipschitz map, 
or equivalently $\|A(\gamma)\|\leq C$, for all $\gamma\in \Gamma$. 
\end{Definition}

Note that $C\geq 1$ necessarily. 
%$$1=\|A(\gamma\gamma^{-1})\|\leq \|A(\gamma)\|\|A(\gamma^{-1})\|\leq C^2$$

Most basic example of a uniformly Lipschitz affine action is an isometric action. 
Recall that a $\sigma$-compact, locally compact topological group $G$ is said to have 
property $F\hilbert$ if any continuous isometric action 
$\rho\colon G \curvearrowright \mathcal{H}$ has a fixed point, that is, 
there exists $\mathbf{v}\in \hilbert$ such that $\rho(g)(\mathbf{v}) = \mathbf{v}$ for all $g\in G$. 
It is a celebrated result of Delorme \cite{Delorme} and Guichardet \cite{Guichardet} that 
property $F\hilbert$ is equivalent to Kazhdan's property (T). 
Kazhdan \cite{Kazhdan} defined this property for locally compact groups 
in terms of unitary representations, and proved that if $\Gamma$ is a lattice 
in a Lie group $G$, then $\Gamma$ has property (T) if and only if $G$ has property (T). 
As examples, simple real Lie groups of real rank at least two have property (T). 
For $n\geq 2$, $\mathrm{Sp}(n,1)$ is a simple Lie group of real rank one 
that has property (T). 
Thus these Lie groups and their lattices have property $F\hilbert$. 

%It is difficult to construct uniformly Lipschitz affine actions explicitly. 

In his unpublished work, Shalom proved the following theorem which exhibits 
that higher-rank groups have stronger rigidity than $\mathrm{Sp}(n,1)$ 
(cf.\! \cite{BFGM, Nowak2}). 

\begin{Theorem}[Shalom]%, unpublished]
\label{shalom_simple}
{\rm (i)}\,\, Any uniformly Lipschitz affine action of a simple real Lie group 
of real rank at least two (or its lattices) on $\hilbert$ has a fixed point.\\ 
{\rm (ii)}\,\, $\mathrm{Sp}(n,1)$ admits a uniformly Lipschitz affine action 
on $\mathcal{H}$ without fixed point. 
\end{Theorem}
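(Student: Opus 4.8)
The plan is to prove the two parts by quite different means: (i) by reducing a uniformly Lipschitz affine action to an \emph{isometric} affine action on a Banach space linearly isomorphic to $\hilbert$ and invoking a Banach-space form of property (T) available for higher-rank groups; and (ii) by exhibiting an explicit fixed-point-free action built from a non-unitarizable uniformly bounded representation of $\mathrm{Sp}(n,1)$.

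For (i), let $G$ be the simple Lie group of real rank $\geq 2$, and suppose first that $G$ itself acts via a uniformly $C$-Lipschitz affine action $\rho$ with linear part $A$ and cocycle $b$, so that $A\colon G\to\mathbb{B}(\hilbert)$ is a uniformly bounded representation and $b(gg')=b(g)+A(g)b(g')$. The first step is to renorm: set $\|\mathbf{v}\|'=\sup_{g\in G}\|A(g)\mathbf{v}\|$, which is equivalent to $\|\cdot\|$ (as $\|\mathbf{v}\|\leq\|\mathbf{v}\|'\leq C\|\mathbf{v}\|$) and for which every $A(g)$ is an isometry; then $\rho$ is an affine isometric action on the Banach space $E=(\hilbert,\|\cdot\|')$. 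Being linearly isomorphic to a Hilbert space, $E$ is superreflexive, hence admits \emph{some} equivalent uniformly convex norm, though not necessarily a $G$-invariant one. The second step is the Banach rigidity of higher-rank groups (Shalom; cf.\ \cite{BFGM}): split $E=E^{A(G)}\oplus E'$ into the fixed subspace of $A(G)$ and a canonical invariant complement (using reflexivity); on $E^{A(G)}$ the action is by translations along a continuous homomorphism into an abelian group, hence trivial by property (T); on $E'$ the linear part has no invariant vectors, and the higher-rank structure now enters through subgroups of the form $\mathrm{SL}_2(\R)\ltimes\R^2$, whose relative property (T) survives the passage to uniformly bounded (not merely unitary) representations by a Fourier-analytic argument over the abelian normal factor, forcing $A|_{E'}$ to have a spectral gap and the affine action on $E'$ to have a fixed point. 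For a lattice $\Gamma<G$ one induces the action to $G$ (induction keeps the uniform Lipschitz bound, the induced linear part being again uniformly bounded) and pushes the fixed point back down, or argues directly via bounded generation of $\Gamma$. I expect the main obstacle to be the last part of the second step: in the Hilbert case a spectral gap for the linear part yields a fixed point by a one-line circumcenter argument, but here the $G$-invariant norm $\|\cdot\|'$ need not be strictly, let alone uniformly, convex, so one must transfer the requisite convexity or rerun the displacement-function/normal-structure argument valid in uniformly convex Banach spaces — this is exactly where the renorming costs something, and where higher rank does the work that fails for $\mathrm{Sp}(n,1)$. (The statement would be immediate if uniformly bounded representations of higher-rank groups were known to be unitarizable; but this is not known, and part (ii) shows it is false for $\mathrm{Sp}(n,1)$.)

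For (ii), I would build the action from a non-unitarizable uniformly bounded representation. Cowling's analytic continuation of the quaternionic complementary series of $\mathrm{Sp}(n,1)$ gives a family $\pi_s$ of representations on a fixed Hilbert space that are uniformly bounded for $s$ in a strip strictly larger than the unitary interval; at the non-unitary edge $\pi_s$ is not unitarizable, so its reduced first cohomology is no longer forced to vanish by property (T). One then produces a nonzero class in $H^1(\mathrm{Sp}(n,1),\pi_s)$ for suitable $s$ — either by a Guichardet--Wigner--Delorme-type deformation argument, differentiating $\pi_s$ at the parameter where (almost) invariant vectors appear, or geometrically, by writing a $\pi_s$-cocycle from the conformal action of $\mathrm{Sp}(n,1)$ on the boundary sphere $\partial\mathbf{H}^n_{\mathbb{H}}$; this is the rank-one phenomenon behind the failure of property $(F_{L^p})$ for $\mathrm{Sp}(n,1)$ once $p$ exceeds the conformal dimension $4n+2$ of that boundary. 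Fixing $b$ in this nonzero class, the affine action $\rho(g)\mathbf{v}=\pi_s(g)\mathbf{v}+b(g)$ is uniformly Lipschitz because $\pi_s$ is uniformly bounded, and has no fixed point because $b$ is not a coboundary. The delicate point is exactly the verification that this cocycle is a genuine non-coboundary — equivalently that the affine orbit maps are unbounded — the feature distinguishing $\mathrm{Sp}(n,1)$, with its Haagerup-like behaviour at the edge of the uniformly bounded strip, from the higher-rank groups of part (i).
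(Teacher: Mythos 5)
The paper does not prove Theorem \ref{shalom_simple}: it is quoted as an unpublished result of Shalom, with pointers to \cite{BFGM, Nowak2} for accounts, and it plays no logical role in the paper's own arguments. So there is no in-paper proof to compare your proposal against; what follows is an assessment of the proposal on its own terms.

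Your outline reconstructs the strategy generally attributed to Shalom, and the architecture is right: for (i), renorm by $\|\mathbf{v}\|' = \sup_{g}\|A(g)\mathbf{v}\|$ to make the action isometric on a Banach space isomorphic to $\hilbert$, kill the translation part on the $A(G)$-fixed subspace (perfectness of $G$ suffices there), and run a relative-property-(T) argument over copies of $\mathrm{SL}_2(\R)\ltimes\R^2$; for (ii), use Cowling's uniformly bounded analytic continuation of the complementary series of $\mathrm{Sp}(n,1)$ past the unitary window together with a nonvanishing $1$-cocycle. But in both parts you stop exactly at the step that carries the mathematical content. In (i), the assertion that relative property (T) of $(\mathrm{SL}_2(\R)\ltimes\R^2,\R^2)$ ``survives the passage to uniformly bounded representations by a Fourier-analytic argument'' is itself the theorem to be proved: one must unitarize $A|_{\R^2}$ (Day--Dixmier, since $\R^2$ is amenable), verify that the resulting spectral measure on the dual of $\R^2$ remains quasi-invariant with controlled distortion under $\mathrm{SL}_2$, and rerun the no-invariant-mean argument; none of this is supplied. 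By contrast, the obstacle you single out --- the lack of an invariant uniformly convex norm --- is not a real one: once the orbit is bounded, its closed convex hull is a weakly compact convex invariant set and the Ryll--Nardzewski fixed point theorem applies to the affine isometric action on $(\hilbert,\|\cdot\|')$. Also, inducing from a non-uniform lattice to $G$ while keeping the linear part uniformly bounded requires the square-integrability of the lattice, which you do not address. In (ii) the entire burden is the existence of a cocycle for $\pi_s$ that is not a coboundary --- indeed one that is proper, as the paper records after the theorem via Mimura's observation --- and you explicitly defer this ``delicate point'' rather than establish it; the deformation and boundary constructions you gesture at are the right sources, but neither is carried out. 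As a roadmap the proposal is accurate and well-informed; as a proof it has one acknowledged gap in each part, and in each case that gap is the heart of the statement.
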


Mimura \cite{Mimura} observes that the action in the statement (ii) %of the theorem 
is indeed metrically proper. 
Hence, any infinite discrete subgroup of $\mathrm{Sp}(n,1)$ also admits a uniformly 
Lipschitz affine action on $\mathcal{H}$ without fixed point. 
This exhibits many infinite hyperbolic groups which admit such affine actions. 

Shalom proposed the following (cf.\! \cite{Nowak2}) 

\begin{Conjecture}[Shalom]
Any non-elementary hyperbolic group admits a uniformly Lipschitz affine action 
on $\mathcal{H}$ without fixed point.
\end{Conjecture}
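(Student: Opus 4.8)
The plan is to reformulate the statement cohomologically and then to adapt Shalom's ``derivative of the complementary series'' construction for $\mathrm{Sp}(n,1)$ to the Gromov boundary of an arbitrary non-elementary hyperbolic group $\Gamma$. A uniformly $C$-Lipschitz affine action $\rho(\gamma)\mathbf{v}=A(\gamma)\mathbf{v}+b(\gamma)$ of $\Gamma$ on $\hilbert$ without fixed point is precisely the data of a uniformly bounded representation $A\colon\Gamma\to\mathbb{B}(\hilbert)$ (that is, $\sup_\gamma\|A(\gamma)\|<\infty$) together with a $1$-cocycle $b\in Z^1(\Gamma,A)$ that is not a coboundary, i.e.\ $[b]\neq 0$ in $H^1(\Gamma,A)$; if moreover $\|b(\gamma)\|\to\infty$ the action is metrically proper, which is the strong form one would aim for. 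So it suffices to produce, on some Hilbert space, a uniformly bounded representation of $\Gamma$ carrying a cocycle not cohomologous to zero --- necessarily a non-unitarizable representation when $\Gamma$ has property (T), which is exactly the case where the conjecture has content: for a-T-menable hyperbolic groups (in particular cubulated ones) a proper isometric action on $\hilbert$ already exists, and for infinite subgroups of $\mathrm{Sp}(n,1)$ the desired action follows from Theorem~\ref{shalom_simple}(ii) and the discussion after it. Recall also that every non-elementary hyperbolic group contains a quasiconvex copy of $F_2$ and is therefore non-amenable, so non-unitarizable uniformly bounded representations are at least available in principle.

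Next I would set up the analysis on $\partial\Gamma$. Fixing a basepoint and a word metric, the boundary carries a visual metric $d$ and a Patterson--Sullivan measure $\nu$ which is Ahlfors $Q$-regular for the appropriate exponent $Q$, and the $\Gamma$-action on $(\partial\Gamma,\nu)$ is quasiconformal with Radon--Nikodym cocycle $J_\gamma(\xi)=d(\gamma^{-1}_*\nu)/d\nu(\xi)$ governed by the Busemann cocycle. For a complex parameter $z$ in a neighbourhood of a critical value $z_*$ I would consider the family $\pi_z(\gamma)f(\xi)=J_\gamma(\xi)^{z}\,f(\gamma^{-1}\xi)$, which interpolates between the unitary boundary representation on $L^2(\partial\Gamma,\nu)$ and the representation by composition, for which the constant function $\mathbf{1}$ is invariant; the correct inner product on the intermediate members is of Riesz type, with kernel $\iint d(\xi,\eta)^{-c(z)}f(\xi)\overline{g(\eta)}\,d\nu\,d\nu$, whose positivity for real $z$ in a one-sided neighbourhood of $z_*$ is an energy statement on the Ahlfors-regular space $\partial\Gamma$. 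The decisive step is to realise this whole family, for $z$ near $z_*$, on a single Hilbert space $\hilbert$ as a real-analytic family of \emph{uniformly bounded} representations --- the hyperbolic-group analogue of Cowling's theorem for $\mathrm{Sp}(n,1)$. Granting this, one takes the orbit $z\mapsto \pi_z(\gamma)\mathbf{1}$ of the distinguished vector, which is $\pi_{z_*}$-invariant at $z=z_*$, so that $b_z(\gamma)=\pi_z(\gamma)\mathbf{1}-\mathbf{1}$ is a cocycle, and differentiates: $\beta(\gamma):=\tfrac{d}{dz}\big|_{z=z_*}b_z(\gamma)$ lies in $Z^1(\Gamma,\pi_{z_*})$. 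Finally I would bound $\|\beta(\gamma)\|$ from below using the asymptotics of the Busemann cocycle and orbit counting --- one expects $\|\beta(\gamma)\|\asymp\sqrt{|\gamma|}$, as for $\mathrm{Sp}(n,1)$ --- so that $\beta$ is not merely noncohomologous to zero but metrically proper, yielding a proper uniformly Lipschitz affine action of $\Gamma$ on $\hilbert$.

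The hard part is exactly the uniform boundedness of the family $\{\pi_z\}$ on a fixed Hilbert space. For $\mathrm{Sp}(n,1)$ this is Cowling's deep theorem, resting on explicit Knapp--Stein intertwining operators and closely tied to weak amenability with constant $1$; for a general non-elementary hyperbolic group the boundary is only known to be a doubling, Ahlfors-regular, linearly connected compact metric space with a quasiconformal (not conformal) $\Gamma$-action and no ambient group structure, and one must carry out a Calder\'on--Zygmund analysis of the singular integral operators given by convolution against $d(\xi,\eta)^{-s}$ on such a space, with bounds uniform in both the parameter and the group element. This ``boundary representation'' programme is known to succeed for Fuchsian and surface groups, for certain rank-one lattices, and for some hyperbolic Coxeter groups and groups acting on hyperbolic buildings, but a version valid for all hyperbolic groups --- in particular the property (T) examples not commensurable to subgroups of $\mathrm{Sp}(n,1)$ --- is the open core of the conjecture, and I would expect this to be where essentially all the difficulty lies. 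A secondary point, which should be manageable once uniform boundedness is in hand, is checking that $\beta$ is genuinely nontrivial in reduced cohomology $\overline{H}^1(\Gamma,\pi_{z_*})$; this is precisely what the lower bound on $\|\beta(\gamma)\|$, coming from the decay rate of the matrix coefficients $\langle\pi_z(\gamma)\mathbf{1},\mathbf{1}\rangle_z$ as $z\to z_*$, is designed to deliver.
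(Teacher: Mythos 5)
This statement is labelled a \emph{Conjecture} in the paper and is stated without proof --- indeed, the whole point of \S 1 is that it is open --- so there is no ``paper's own proof'' to compare against, and your text should be judged as a proposed resolution of an open problem. As such it contains a genuine gap, which to your credit you identify yourself: the entire argument funnels through the assertion that the analytic family $\pi_z$ of boundary representations can be realised as \emph{uniformly bounded} operators on a single Hilbert space for $z$ in a neighbourhood of the critical exponent. For $\mathrm{Sp}(n,1)$ this is Cowling's theorem, which uses the Lie-theoretic structure of the boundary (Knapp--Stein intertwiners, explicit Plancherel-type estimates); for a general non-elementary hyperbolic group the boundary is merely an Ahlfors-regular quasi-self-similar metric measure space, and no such uniform-boundedness theorem is known. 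Everything after ``Granting this'' is conditional on precisely the statement that constitutes the open content of the conjecture, so the proposal is a (reasonable, well-informed) research programme, not a proof. The ancillary steps are mostly sound: the cohomological reformulation is correct, and unboundedness of the derived cocycle $\beta$ does suffice for fixed-point-freeness, since a uniformly Lipschitz affine action with a bounded orbit has a fixed point (renorm by $|||\mathbf{v}|||=\sup_\gamma\|A(\gamma)\mathbf{v}\|$ as in \S 2 to get an isometric action on a reflexive Banach space, then take a circumcentre). But the positivity of the Riesz-type kernel near $z_*$ and the $\sqrt{|\gamma|}$ lower bound on $\|\beta(\gamma)\|$ are also only known in special cases, so they would need independent justification even if uniform boundedness were granted.

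One further remark worth internalising: the paper's own main results (Theorems \ref{main_fixed_point_theorem1} and \ref{main_fixed_point_theorem2}) cut against any too-uniform version of your construction. For a random group in the graph model --- which is non-elementary hyperbolic under mild extra hypotheses --- every uniformly $C$-Lipschitz action has a fixed point for any $C$ fixed in advance, and even actions whose linear parts grow like $l_{\mathrm{conj}}([\gamma])^\sigma$ with $\sigma<1/10$ have fixed points. So if Shalom's conjecture holds for these groups, the Lipschitz constant of the fixed-point-free action must necessarily blow up along the defining sequence of expanders, and in particular no construction producing a bound on $\sup_z\sup_\gamma\|\pi_z(\gamma)\|$ depending only on coarse data (hyperbolicity constant, number of generators, boundary dimension) can work uniformly across all hyperbolic groups. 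Any successful attack on the conjecture must explain where this group-dependent loss of control enters; your sketch does not currently account for it.
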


\section{Fixed-point property of random groups w.r.t.\! uniformly Lipschitz affine actions}

%BFGM, Mimuraの結果を確認

In this section, we review two fixed-point theorems regarding uniformly Lipschitz affine actions 
of certain ramdom groups on a Hilbert space. 
Recall that in the Gromov density model $\mathcal{G}(m,l,d)$ of random groups, 
generators $s_1^{\pm 1},\dots, s_m^{\pm 1}$ and a density $0<d<1$ are fixed, 
and choose $(2m-1)^{dl}$ words, each of them chosen uniformly and independently from the set 
of all reduced words of lenght $l$ in $s_1^{\pm 1},\dots, s_m^{\pm 1}$. 
The group $\Gamma$ generated by $s_1^{\pm 1},\dots, s_m^{\pm 1}$ and having those reduced words as relations 
is a constituent of the model $\mathcal{G}(m,l,d)$. 
Given a group property P (e.g.~Kazhdan's property (T)), we say that a {\em random group 
in the Gromov density model has property P} if the probability of $\Gamma$ having property P 
tends to one as $l\to \infty$. 

%Let $F_m$ be the free group on $m\geq 2$ generators $a_1,\dots,a_m$. 
%For any integer $l$ let $S_l\subset F_m$ be the set of reduced words of length $l$ in these generators. 
%
%Let $0\leq d\leq 1$. A random set of relators at density $d$, at length $l$ is a
%$(2m-1)^{dl}$-tuple of elements of $S_l$, randomly picked among all elements of
%$S_l$ (uniformly and independently).
%A random group at density $d$, at length $l$ is the group $G$ presented by
%$\langle a_1,\dots,a_m \mid R \rangle$ where $R$ is a random set of relators at density $d$, at length $l$. 
%We say that a property of $R$, or of $G$, occurs with overwhelming probability
%at density $d$ if its probability of occurrence tends to $1$ as $l\to \infty$, for fixed $d$.

\begin{Theorem}[Nowak \cite{Nowak1}]\label{nowak_theorem1} 
Fix $1\leq C<\sqrt{2}$. 
Let $\Gamma$ be a random group in the Gromov density model %$\mathcal{G}(m,d)$ 
with density $1/3 < d < 1/2$. 
Then any uniformly $C$-Lipschitz affine action 
$\rho\colon \Gamma\curvearrowright \mathcal{H}$ %with $1\leq C<\sqrt{2}$ 
has a fixed point. 
\end{Theorem}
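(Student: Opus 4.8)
\medskip
\noindent\emph{Sketch of proof.}
The plan is to combine \.Zuk's spectral-geometric picture of a random group at density above $1/3$ with a discrete harmonic-map argument, the key point being that a \emph{uniformly} Lipschitz affine action on $\hilbert$ becomes an isometric affine action after replacing the Hilbert norm by an equivalent one.

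First I would set up the dictionary: $\rho$ is encoded by a pair $(A,b)$, where $A\colon\Gamma\to\mathbb{B}(\hilbert)$ is a representation with $\|A(\gamma)\|\le C$ for all $\gamma$ and $b\in Z^1(\Gamma,A)$, and $\rho$ has a fixed point if and only if the cocycle $b$ is a coboundary. Since $A$ is uniformly bounded, $\|\mathbf v\|':=\sup_{\gamma\in\Gamma}\|A(\gamma)\mathbf v\|$ is a norm satisfying $\|\mathbf v\|\le\|\mathbf v\|'\le C\|\mathbf v\|$ that is preserved by every $A(\gamma)$; thus $\rho$ acts by affine isometries of the reflexive Banach space $X':=(\hilbert,\|\cdot\|')$, which is $C$-equivalent to $\hilbert$ and hence obeys a parallelogram inequality relaxed by factors $C^{\pm2}$. (This change of norm, unavailable for the more general actions of the main theorem, is precisely where uniform boundedness of $C$ is used.) Splitting off the closed $A$-invariant subspace, on which $b$ is a homomorphism into an abelian group and hence a coboundary because a random group has finite abelianization, we may assume $A$ has no nonzero invariant vector; we then argue by contradiction, assuming $\rho$ has no fixed point.

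Now the geometry of the random group enters. By the results underlying \.Zuk's theorem, for $1/3<d<1/2$ the group $\Gamma$ acts, with probability tending to $1$ as $l\to\infty$, properly and cocompactly on a simply connected $2$-dimensional simplicial complex $X$ whose vertex links are finite connected graphs with normalized Laplacian spectral gap $\lambda_1$ tending to $1$ as $l\to\infty$; having fixed $C<\sqrt2$, we choose $l$ large enough that $\lambda_1>C^2/2$ for every vertex link. Over the space of $\rho$-equivariant maps $f\colon X^{(0)}\to X'$ one minimizes the $\Gamma$-invariant Dirichlet energy $\mathcal E(f)=\tfrac12\sum(\|f(x)-f(y)\|')^2$, the sum taken over a fundamental domain of edges; the functional is convex, and proper because $\rho$ has no fixed point and $A$ has no invariant vector, so by reflexivity of $X'$ a minimizer $f$ exists, and $f$ is harmonic in the barycentric sense, i.e. $f(x)$ minimizes $y\mapsto\sum_{y'\sim x}(\|y-f(y')\|')^2$ for every vertex $x$. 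Applying at each $x$ the graph Poincar\'e inequality on $\mathrm{Lk}(x)$ --- which holds in $X'$ with $\lambda_1$ degraded by the relaxed-parallelogram factor, so that in effect one must compare $\lambda_1$ with $C^2/2$ --- together with the elementary barycenter (cosine) inequality in $X'$, and summing over the fundamental domain, yields $\mathcal E(f)\le\theta\,\mathcal E(f)$ with $\theta=\theta(C,\lambda_1)<1$ precisely when $\lambda_1>C^2/2$. Hence $\mathcal E(f)=0$, so $f$ is constant and $\rho$ has a fixed point --- the desired contradiction. The value $\sqrt2$ is exactly the cutoff below which $C^2/2<1$ can be overtaken by $\lambda_1$.

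The step I expect to be the main obstacle is the last one: carrying out the harmonic-map/Poincar\'e estimate in the Banach space $X'$ rather than in $\hilbert$, with the dependence on $C$ tracked sharply enough to reach the threshold $\lambda_1>C^2/2$ --- any wasteful bound would only yield rigidity for $C$ strictly below $\sqrt2$. This demands an optimal use of the relaxed parallelogram law of $\|\cdot\|'$, and it is exactly the difficulty that energy minimization can no longer overcome once the Lipschitz constants are permitted to grow, which is why the main theorem must instead rely on Gromov's discrete tension-contracting flow \cite{Gromov2}. A secondary point is to confirm that the link spectral gaps genuinely tend to $1$ in the density model itself.
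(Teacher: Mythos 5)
Your renorming step --- passing to $|||\mathbf{v}|||=\sup_{\gamma}\|A(\gamma)\mathbf{v}\|$ so that $\rho$ becomes isometric on a reflexive Banach space $C$-equivalent to $\mathcal{H}$, with the spectral condition degraded by $C$ and $\sqrt{2}$ appearing as the exact cutoff --- is precisely the paper's mechanism (it yields $\kappa_2(S,\mathcal{B}),\kappa_2(S,\mathcal{B}^*)\le C\,\kappa_2(S,\R)$ and hence Corollary \ref{nowak_corollary}). Where you diverge is in how the fixed point is then extracted, and that is where your argument has genuine gaps. The paper at this stage simply invokes Nowak's Banach-space fixed-point theorem (Theorem \ref{nowak_theorem2}) applied to the link graph of the generating set, together with the \.Zuk/Kotowski--Kotowski reduction: one passes to a random group $\Gamma'$ in a different model, with $\Gamma$ containing a quotient of $\Gamma'$ as a finite-index subgroup, for which $\kappa_2(S',\R)$ is provably close to $1$. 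Your closing remark that one must ``confirm that the link spectral gaps genuinely tend to $1$ in the density model itself'' is not a secondary point: that is exactly the step that was problematic in \.Zuk's original argument and that forces the detour through a different model.

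The more serious gap is in your energy-minimization step. You assert that the Dirichlet energy is ``proper because $\rho$ has no fixed point and $A$ has no invariant vector, so by reflexivity of $X'$ a minimizer $f$ exists.'' Coercivity does not follow from these hypotheses: the infimum of the energy over equivariant maps can be $0$ (or positive) without being attained, with almost-minimizers escaping to infinity. The paper's Appendix is devoted to exactly this issue and shows that even a \emph{local} energy minimizer can only be produced after a scaling ultralimit, which in general replaces $\mathcal{H}$ and $\rho$ by a new Hilbert space and a new action; your clean contradiction (``minimizer for the original $\rho$, hence $\mathcal{E}(f)=0$, hence $\rho$ has a fixed point'') therefore does not come for free. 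A second problem is the preliminary reduction in which you ``split off the closed $A$-invariant subspace'': a uniformly bounded representation of a non-amenable group need not be unitarizable, and in the renormed Banach space $X'$ (which is no longer Hilbert) there is no canonical $A$-invariant complement to the fixed subspace, so this decomposition is not available as stated. Both difficulties are bypassed by the paper's route through Theorem \ref{nowak_theorem2}, whose proof does not proceed by minimizing energy.
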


Note that the random group $\Gamma$ of the theorem is non-elementary hyperbolic 
(hence infinite) \cite{Gromov1, Ollivier} and has property (T) 
\cite{Zuk2, KotowskiKotowski}. 

The proof of Theorem \ref{nowak_theorem1} is based on a fixed-point theorem 
for an isometric action of a deterministic group on a Banach space, 
which we shall review. 
Let $\Gamma$ be a finitely generated group equipped with a finite, symmetric 
generating set $S$ not containing the identity element. 
Modifying the construction as in \cite{Zuk2}, one constructs the link graph $\mathcal{L}(S)$; 
its vertices are the elements of $S$, generators $s$ and $t$ span an edge (written $s\sim t$) 
if $s^{-1}t$ is a generator, and the edges are suitably weighted. 
(For the account of the choice of weight, see \cite[p.~703]{Nowak1}, 
\cite[Proof of Lemma 3.1]{IzekiKondoNayatani1}.) 

Let $\mathcal{B}$ be a Banach space with norm $\|\cdot\|$ and denote by $\kappa_p(S, \mathcal{B})$ 
the optimal constant in the $p$-Poincar\'{e} inequality for maps $f\colon S\to \mathcal{B}$: 
\begin{equation*}\label{p-poincare}
\sum_{s\in S} \| f(s)-\overline{f} \|^p\, m(s)\leq \kappa^p \sum_{s\sim t} \| f(s)-f(t)\|^p\, m(s,t), 
\end{equation*}
where $m(s,t)$ is the weight of the edge $(s,t)$, $m(s) = \sum_{t\sim s} m(s,t)$, and 
$\overline{f} = \sum_{s\in S} m(s) f(s)/\sum_{s\in S} m(s)$, the mean value of $f$. 

\begin{Theorem}[Nowak \cite{Nowak1}]\label{nowak_theorem2}
Let $\mathcal{B}$ be a reflexive Banach space and let $\Gamma$ and $S$ be as above. 
If the link graph $\mathcal{L}(S)$ is connected and for some $1<p<\infty$ and its adjoint index 
$p^*$, satisfying $1/p+1/p^*=1$, the corresponding Poincar\'{e} constants satisfy 
$$
\max \{ 2^{-1/p} \kappa_p(S, \mathcal{B}), 2^{-1/p^*} \kappa_{p^*}(S, \mathcal{B}^*) \} < 1, 
$$
then any affine isometric action $\rho\colon \Gamma\curvearrowright \mathcal{B}$ 
has a fixed point. 
\end{Theorem}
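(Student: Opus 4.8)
The plan is to reduce the statement to showing that the $1$-cocycle defining the given affine isometric action is a coboundary, and to prove this by a local-to-global (Garland-type) argument in which the two Poincar\'{e} inequalities control, respectively, a closed-range estimate over $\mathcal{B}$ and a no-harmonic-cochains estimate over $\mathcal{B}^*$, the two halves being joined by reflexivity. First I would split the action as $\rho(\gamma)\mathbf{v}=\pi(\gamma)\mathbf{v}+b(\gamma)$, where $\pi\colon\Gamma\curvearrowright\mathcal{B}$ is a linear isometric representation and $b\colon\Gamma\to\mathcal{B}$ is a $1$-cocycle for $\pi$; a fixed point exists if and only if $b$ is a coboundary, and since $S$ generates $\Gamma$ and $\rho$ is a homomorphism into affine bijections it is enough to produce $\mathbf{v}_0\in\mathcal{B}$ with $\rho(s)\mathbf{v}_0=\mathbf{v}_0$ for every $s\in S$. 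I would then pass to the $2$-complex $X$ on which $\Gamma$ acts, with $1$-skeleton the Cayley graph of $(\Gamma,S)$ and a triangle on each $\{g,gs,gt\}$ with $s^{-1}t\in S$: every vertex link of $X$ is then precisely the weighted graph $\mathcal{L}(S)$, a $\rho$-equivariant map $X^{(0)}\to\mathcal{B}$ amounts to the single choice $\mathbf{v}=f(1)$ (with $f(g)=\rho(g)\mathbf{v}$), and $\beta:=(b(s))_{s\in S}$ is an equivariant $1$-cochain which is closed because $b$ is a cocycle. The goal becomes: $\beta\in\im(d^0)$, where $d^0\colon\mathcal{B}\to C^1(X;\pi)\cong\ell^p(S,\mathcal{B})$ is the coboundary map $\mathbf{v}\mapsto(\pi(s)\mathbf{v}-\mathbf{v})_{s\in S}$.

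The first main step is to use $2^{-1/p}\kappa_p(S,\mathcal{B})<1$ to show that $d^0$ has closed range. Introducing the averaging operator $P\mathbf{v}=m(S)^{-1}\sum_{s\in S}m(s)\,\pi(s)\mathbf{v}$ and applying the $p$-Poincar\'{e} inequality to the map $s\mapsto\pi(s)\mathbf{v}$---for which $\|\pi(s)\mathbf{v}-\pi(t)\mathbf{v}\|=\|\mathbf{v}-\pi(s^{-1}t)\mathbf{v}\|$ whenever $s\sim t$---bounds $\sum_{s\in S}\|\pi(s)\mathbf{v}-P\mathbf{v}\|^p\,m(s)$ in terms of $\|d^0\mathbf{v}\|^p$; the gap $2^{-1/p}\kappa_p<1$ then forces the iterates $P^k$ to converge geometrically, uniformly on bounded sets, to the fixed subspace $\ker(d^0)$. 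Summing the resulting geometric series gives $\mathrm{dist}(\mathbf{v},\ker(d^0))\leq c\,\|d^0\mathbf{v}\|$, so $d^0$ is bounded below modulo its kernel and hence has closed range.

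The second main step invokes the reflexivity of $\mathcal{B}$---hence of $C^1(X;\pi)$, whose dual is $\ell^{p^*}(S,\mathcal{B}^*)$---together with the closed range theorem: a closed cochain lies in $\im(d^0)$ as soon as it annihilates $\ker\big((d^0)^*\big)$. Since $\beta$ is closed it already annihilates the co-exact cochains, so one is reduced to showing that the reduced first cohomology of the dual complex, with coefficients in the contragredient representation $\pi^*$ on $\mathcal{B}^*$, vanishes---equivalently, that every harmonic $p^*$-cochain is zero. This is precisely where $2^{-1/p^*}\kappa_{p^*}(S,\mathcal{B}^*)<1$ enters: the same Garland estimate, now for the $p^*$-Poincar\'{e} inequality on $\mathcal{L}(S)$ with values in $\mathcal{B}^*$, shows that such a cochain would have to be invariant under the dual averaging operator in a way incompatible with the geometric contraction, forcing it to vanish. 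Combining the two steps yields $\beta\in\im(d^0)$, hence the desired $\mathbf{v}_0$, and thus a global fixed point.

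The main obstacle, I expect, is the absence of a self-dual Hilbert structure: there is no orthogonal Hodge decomposition of $C^1(X)$, so the exact, harmonic, and co-exact components cannot be separated directly, and the argument must be run twice---once over $\mathcal{B}$ with exponent $p$ for the closed-range estimate, once over $\mathcal{B}^*$ with exponent $p^*$ for the vanishing of harmonic cochains---these two halves communicating only through reflexivity and the closed range theorem. The technically delicate point is to push through the quantitative Garland/averaging estimate without the spectral theorem, so that the sharp threshold is the stated $2^{-1/p}\kappa_p<1$ (and its dual) rather than merely finiteness of $\kappa_p$; in particular, with no orthogonal projection onto $\ker(d^0)$ available, the iterates $P^k$ have to be controlled by hand. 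Once both contraction estimates are secured, the remaining duality bookkeeping is soft.
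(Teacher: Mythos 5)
This theorem is quoted in the paper from Nowak \cite{Nowak1} without proof, so there is no internal argument to compare against; your proposal has to be measured against Nowak's published one. Your overall architecture --- reduce to showing that the restricted cocycle $\beta=(b(s))_{s\in S}$ lies in $\im d^0$, extract a quantitative estimate on $d^0$ from the $p$-inequality on $\mathcal{B}$, and convert exactness into a dual statement via reflexivity and the closed range theorem, with the $p^*$-inequality on $\mathcal{B}^*$ killing the dual obstruction --- is the right shape, and your closing remark that the two halves communicate only through duality correctly identifies why the hypothesis is two-sided.

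The genuine gap is in your first main step. You assert that $2^{-1/p}\kappa_p(S,\mathcal{B})<1$ alone forces the iterates $P^k$ to converge geometrically to $\ker d^0$, whence $\mathrm{dist}(v,\ker d^0)\leq c\,\|d^0v\|$. Trace through what the Poincar\'{e} inequality applied to $s\mapsto\pi(s)v$ actually yields: since $\|\pi(s)v-\pi(t)v\|=\|v-\pi(s^{-1}t)v\|$ and the link weights are normalized so that the edge sum reproduces $\tfrac12\|d^0v\|_p^p$, one gets $\bigl(\sum_{s}m(s)\|\pi(s)v-Pv\|^p\bigr)^{1/p}\leq\gamma\,\|d^0v\|_p$ with $\gamma=2^{-1/p}\kappa_p<1$, and the triangle inequality then gives only the \emph{reverse} estimate $(1-\gamma)\|d^0v\|_p\leq m(S)^{1/p}\|v-Pv\|$: the gradient is dominated by the averaged displacement. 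That is the wrong direction for a closed-range bound, and the step your iteration actually needs --- $\|d^0(Pv)\|_p\leq\lambda\|d^0v\|_p$ with $\lambda<1$, or any quantitative contraction of $P$ transverse to $\ker d^0$ --- is precisely the point where \.{Z}uk's Hilbert-space argument invokes the variance identity $\sum_{s,t}m(s)m(t)\|f(s)-f(t)\|^2=2m(S)\sum_{s}m(s)\|f(s)-\overline{f}\|^2$, equivalently the self-adjointness of $P$, neither of which survives in a general reflexive $\mathcal{B}$. A sanity check that something is missing: the Poincar\'{e} inequality applies verbatim to $s\mapsto\rho(s)v$ for the \emph{affine} action, so if the single $p$-condition really contracted the iterates you could run the identical iteration on $v\mapsto m(S)^{-1}\sum_{s}m(s)\rho(s)v$ and obtain the fixed point outright, never using reflexivity or the condition on $\mathcal{B}^*$; since Nowak states and needs both, the contraction cannot hold in this generality. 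The lower bound for $d^0$ has to come from playing the $p$-inequality on $\mathcal{B}$ off against the $p^*$-inequality on $\mathcal{B}^*$ through the duality pairing, which is where the real work of the proof lies. Relatedly, in your second step the phrase ``the same Garland estimate \dots forces it to vanish'' is not the same estimate: what is needed there is $\ker\bigl((d^0)^*\bigr)\subset\overline{\im\bigl((d^1)^*\bigr)}$, a degree-one homology statement for the dual complex requiring its own quantitative input, and as written your sentence only restates the desired conclusion.
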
 

Let $\rho\colon \Gamma\curvearrowright \mathcal{H}$ be a uniformly $C$-Lipschitz 
affine action, where $\mathcal{H}$ is a Hilbert space, and introduce a new norm on $\mathcal{H}$ by 
$||| \mathbf{v} ||| = \sup_{\gamma\in \Gamma} \| A(\gamma) (\mathbf{v}) \|$ 
for $\mathbf{v}\in \mathcal{H}$. 
Then $\mathcal{B} = ( \mathcal{H}, ||| \cdot ||| )$ 
is a Banach space isomorphic to $\mathcal{H}$, thus reflexive, 
and $\rho\colon \Gamma\curvearrowright \mathcal{B}$ is an affine isometric action. 
Since the norms of $\mathcal{B}$ and $\mathcal{B}^*$ ($\cong \mathcal{H}$) satisfy 
$\|\cdot\|\leq |||\cdot |||\leq C \|\cdot\|$ and $C^{-1}\|\cdot\|\leq |||\cdot |||^*\leq \|\cdot\|$, respectively, 
it follows that  
%If $$\sum_{s\in S} ||| f(s)-\overline{f} |||^2\, m(s)\leq \kappa^2 \sum_{s\sim t} ||| f(s)-f(t) |||^2\, m(s,t), $$
%then $$\sum_{s\in S} \| f(s)-\overline{f} \|^2\, m(s)\leq C^2 \kappa^2 \sum_{s\sim t} \| f(s)-f(t) \|^2\, m(s,t). $$
%Therefore, 
$$
\kappa_2(S, \mathcal{B}), \kappa_2(S, \mathcal{B}^*)\leq C\, \kappa_2(S, \mathcal{H}) = C\, \kappa_2(S, \R). 
$$
Therefore, we obtain the following 

\begin{Corollary}\label{nowak_corollary}
Let $\Gamma$ and $S$ be as above, and suppose that the link graph $\mathcal{L}(S)$ is connected. 
Then any uniformly $C$-Lipschitz affine action $\rho\colon \Gamma\curvearrowright \mathcal{H}$ 
with $C\, \kappa_2(S, \R)<\sqrt{2}$ has a fixed point. 
\end{Corollary}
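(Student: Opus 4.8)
The plan is to derive Corollary \ref{nowak_corollary} directly from Theorem \ref{nowak_theorem2} by exhibiting, for a given uniformly $C$-Lipschitz affine action, an equivalent Banach norm that turns the action into an isometric one, and then estimating the relevant Poincar\'e constants. Concretely, given $\rho\colon \Gamma\curvearrowright \mathcal{H}$ with $\|A(\gamma)\|\leq C$ for all $\gamma$, I would first define $||| \mathbf{v} ||| = \sup_{\gamma\in \Gamma} \| A(\gamma)(\mathbf{v}) \|$. One checks this is a norm: finiteness follows from $\|A(\gamma)\|\leq C$ (so $|||\mathbf{v}|||\leq C\|\mathbf{v}\|$), and $|||\mathbf{v}|||\geq \|A(e)\mathbf{v}\| = \|\mathbf{v}\|$ gives the lower bound, hence $\|\cdot\|\leq |||\cdot|||\leq C\|\cdot\|$ and $\mathcal{B}=(\mathcal{H},|||\cdot|||)$ is a Banach space isomorphic to $\mathcal{H}$, in particular reflexive. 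The key algebraic point is that for each $\gamma'\in\Gamma$,
\[
||| A(\gamma')\mathbf{v} ||| = \sup_{\gamma\in\Gamma}\|A(\gamma)A(\gamma')\mathbf{v}\| = \sup_{\gamma\in\Gamma}\|A(\gamma\gamma')\mathbf{v}\| = |||\mathbf{v}|||,
\]
using $A(\gamma)A(\gamma')=A(\gamma\gamma')$ and that right translation by $\gamma'$ permutes $\Gamma$; thus each $A(\gamma')$ is a $|||\cdot|||$-isometry and $\rho$ becomes an affine isometric action on $\mathcal{B}$.

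Next I would transfer the norm estimates to the dual space. Since $\|\cdot\|\leq|||\cdot|||\leq C\|\cdot\|$, dualizing reverses and rescales the inequalities: for the dual norm $|||\cdot|||^*$ one gets $C^{-1}\|\cdot\|\leq |||\cdot|||^*\leq\|\cdot\|$, where we identify $\mathcal{H}^*\cong\mathcal{H}$ via the inner product. Now for any Banach space $\mathcal{B}$ whose norm is equivalent to a Hilbert norm with distortion $C$, the $p$-Poincar\'e constant over the finite link graph satisfies $\kappa_p(S,\mathcal{B})\leq C\,\kappa_p(S,\mathcal{H})$, since replacing $\|\cdot\|$ by $|||\cdot|||$ inflates the left side of the Poincar\'e inequality by at most $C^p$ and can only shrink the right side. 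Specializing to $p=2$ and recalling that for a Hilbert space the vector-valued Poincar\'e constant equals the scalar one, $\kappa_2(S,\mathcal{H})=\kappa_2(S,\R)$, this yields
\[
\kappa_2(S,\mathcal{B}),\ \kappa_2(S,\mathcal{B}^*)\leq C\,\kappa_2(S,\R).
\]

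Finally, I would apply Theorem \ref{nowak_theorem2} with $p=p^*=2$. The hypothesis there becomes $\max\{2^{-1/2}\kappa_2(S,\mathcal{B}),\,2^{-1/2}\kappa_2(S,\mathcal{B}^*)\}<1$, i.e.\ $\kappa_2(S,\mathcal{B}),\kappa_2(S,\mathcal{B}^*)<\sqrt{2}$; by the estimate above this is guaranteed once $C\,\kappa_2(S,\R)<\sqrt{2}$, which is exactly our hypothesis. Connectedness of $\mathcal{L}(S)$ is assumed, and reflexivity of $\mathcal{B}$ was verified, so Theorem \ref{nowak_theorem2} applies and $\rho\colon \Gamma\curvearrowright\mathcal{B}$ has a fixed point; since $\mathcal{B}$ and $\mathcal{H}$ have the same underlying set and the same affine maps, this is a fixed point of the original action $\rho\colon\Gamma\curvearrowright\mathcal{H}$. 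The only genuinely delicate point is the Poincar\'e-constant comparison $\kappa_p(S,\mathcal{B})\leq C\,\kappa_p(S,\mathcal{H})$ together with the identity $\kappa_2(S,\mathcal{H})=\kappa_2(S,\R)$; the former is a direct consequence of the two-sided norm comparison, and the latter is standard (apply the scalar Poincar\'e inequality coordinatewise and sum), so no real obstacle arises here --- this corollary is essentially a bookkeeping consequence of Theorem \ref{nowak_theorem2} and the norm-distortion bounds already displayed in the text.
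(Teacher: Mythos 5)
Your proposal is correct and follows essentially the same route as the paper: the text preceding the corollary introduces exactly the renorming $|||\mathbf{v}|||=\sup_{\gamma}\|A(\gamma)(\mathbf{v})\|$, observes that $\rho$ becomes affine isometric on the reflexive space $\mathcal{B}=(\mathcal{H},|||\cdot|||)$, uses the two-sided comparisons $\|\cdot\|\leq|||\cdot|||\leq C\|\cdot\|$ and $C^{-1}\|\cdot\|\leq|||\cdot|||^*\leq\|\cdot\|$ to get $\kappa_2(S,\mathcal{B}),\kappa_2(S,\mathcal{B}^*)\leq C\,\kappa_2(S,\R)$, and then applies Theorem \ref{nowak_theorem2} with $p=p^*=2$. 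Your additional checks (that $|||\cdot|||$ is indeed $A$-invariant via $A(\gamma)A(\gamma')=A(\gamma\gamma')$, and that $\kappa_2(S,\mathcal{H})=\kappa_2(S,\R)$) correctly fill in the details the paper leaves implicit.
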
 

Now let $\Gamma$ be a random group in the Gromov density model with density $1/3 < d < 1/2$. 
By the argument due to \.Zuk \cite{Zuk2}, Kotowski and Kotowski \cite{KotowskiKotowski}, 
%with high probability 
there is a random group $\Gamma'$ in a different model so that $\Gamma$ contains a quotient of $\Gamma'$ 
as a finite index subgroup and the link graph $\mathcal{L}(S')$ of $\Gamma'$ has $\kappa_2(S', \R)$ 
arbitrarily close to one. 
Therefore, we may apply Corollary \ref{nowak_corollary} to $\Gamma'$ and the conclusion 
of Theorem \ref{nowak_theorem1} holds for $\Gamma'$ and hence for $\Gamma$.  

Gromov \cite{Gromov2} claimed a result similar to Theorem \ref{nowak_theorem1} for a random group 
in the graph model which was also invented by him. 
To state Gromov's result, we first review the construction of this model. 
Let $F_m$ denote the free group on $m$ generators, and let $S$ be the collection of these $m$ elements 
and their inverses. 
Let $G = (V,E)$ be a finite connected graph, where $V$ and $E$ are the sets of vertices 
and undirected edges, respectively. 
We denote the set of directed edges by $\overrightarrow{E}$. 
A map $\alpha\colon \overrightarrow{E}\rightarrow S$ satisfying $\alpha((v,u)) = \alpha((u,v))^{-1}$ 
for all $(u,v)\in \overrightarrow{E}$ is called an {\em $S$-labelling} of $G$. 
Let $\mathcal{A}(m, G)$ denote the set of all $S$-labellings of $G$, consisting of $(2m)^{\# E}$ 
elements, and equip it with the uniform probability measure. 
For $\alpha\in \mathcal{A}(m, G)$
and a path $\overrightarrow{p} = (\overrightarrow{e}_1,\dots, \overrightarrow{e}_l)$ 
in $G$, where $\overrightarrow{e}_i \in \overrightarrow{E}$, define $\alpha(\overrightarrow{p}) 
= \alpha(\overrightarrow{e}_1)\cdot \dots \cdot \alpha(\overrightarrow{e}_l)\in F_m$. 
Then set 
\begin{align*}
& R_\alpha = \{ \alpha(\overrightarrow{c}) \mid \mbox{$\overrightarrow{c}$ are cycles in $G$} \},\\ 
& \Gamma_\alpha = F_m / \mbox{normal closure of $R_\alpha$}. 
\end{align*}
Let $\lambda_1(G, \R)$ denote the second eigenvalue of the discrete Laplacian of $G$, acting on 
real-valued functions on $V$. 
The {\em girth} of $G$, denoted by $\girth(G)$, is the minimal length of a cycle (i.e.~a closed path) in $G$. 
%and the {\em diameter} of $G$, denoted by $\diam(G)$, is the maximum distance between a pair of points in $G$. 

A sequence $\{ G_j \}_{j\in \N}$ of finite graphs is called a {\em sequence of (bounded-degree) 
expanders} if it satisfies 
\begin{enumerate}
\renewcommand{\theenumi}{\roman{enumi}}
\renewcommand{\labelenumi}{(\theenumi)}
\item $\# V_j\to \infty$ as $j\to \infty$,
\item $\exists d,\,\, \forall j,\,\, \forall u\in V_j,\,\, 2\leq {\rm deg} (u)\leq d$\quad (sparce), 
\item $\exists \lambda>0,\,\, \forall j,\,\, \lambda_1(G_j, \R) \geq \lambda$\quad (highly-connected). 
\end{enumerate} 
Such a $\{ G_j \}_{j\in \N}$ is said to have {\em diverging girth} if it further satisfies\\
\quad (iv)\,\, $\girth(G_j)\to \infty$ as $j\to \infty$.\\ 
Now suppose that a sequence of expanders $\{ G_j \}_{j\in \N}$ with diverging girth is given. 
%Let $\mathcal{A}_j$ be the set of all $S$-labelings of $G_j$, equipped with the uniform probability measure.  
%Note that $|\mathcal{A}_j| = \# S^{|E_j|}$. 
Then the collection of groups 
$\mathcal{G}(m, G_j) = \{ \Gamma_\alpha \mid \alpha\in \mathcal{A}(m, G_j) \}$ 
is the graph model of random groups. 
Given a group property P, we say that a {\em random group in the graph model has property P} 
if the probability of $\Gamma_\alpha$ having property P tends to one as $j\to \infty$. 
Gromov \cite{Gromov2} and Silberman \cite{Silberman1} proved that a random group in the graph model 
had fixed-point property for Hilbert spaces with respect to isometric actions, that is, it had 
property (T). 
This result was generalized to fixed-point property for CAT($0$) spaces \cite{IzekiKondoNayatani2} 
(see also \cite{Gromov2}) and for $p$-uniformly convex geodesic metric spaces \cite{NaorSilberman}. 
In both generalizations the degrees of singularity of the relevant geodesic metric spaces 
should be suitably bounded. 

We now state

\begin{Theorem}[Gromov \cite{Gromov2}]\label{main_fixed_point_theorem1} 
Fix $C>0$. 
Let $\Gamma$ be a random group in the graph model associated with a sequence of expanders 
with diverging girth. 
Then any uniformly $C$-Lipschitz affine action $\rho\colon \Gamma\curvearrowright \mathcal{H}$ 
has a fixed point. 
\end{Theorem}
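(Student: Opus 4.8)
Suppose, for contradiction, that $\rho$ has no fixed point, and write $\Gamma=\Gamma_\alpha$ for an $S$-labelling $\alpha$ of one of the graphs $G=G_j$ in the expander sequence, so that $\lambda_1(G,\R)\geq\lambda>0$ while $\girth(G)$ may be taken arbitrarily large once $C$ has been fixed. A $\rho$-equivariant map is encoded, through $\alpha$, by an $\alpha$-twisted map on $G$, or on a ``power'' $G^{(n)}$ of $G$ (join vertices at distance $\le n$, label the new edges by the corresponding words). The plan is to produce a $\rho$-equivariant $n$-step discrete harmonic map $f$ into $\hilbert$ and to show it must be constant; its value is then a $\rho$-fixed point, contradicting the assumption.

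The first temptation is to pass to the equivalent norm $|||\mathbf{v}|||=\sup_{\gamma}\|A(\gamma)\mathbf{v}\|$, with respect to which $\rho$ acts isometrically on the reflexive Banach space $\mathcal{B}=(\hilbert,|||\cdot|||)$, and to run there the isometric harmonic-map argument of Gromov and Silberman. Two difficulties arise: $\mathcal{B}$ is only a $C$-isomorph of $\hilbert$, with uniform-convexity constants that degrade as $C$ grows; and, working directly on $\hilbert$, a local-energy minimizer need not satisfy the harmonicity (barycentre) equation once $\rho$ is non-isometric --- although an equivariant local-energy minimizer does exist, by the construction in the Appendix. I would therefore construct $f$ by running Gromov's discrete tension-contracting flow: start from an arbitrary finite-energy equivariant map, iterate the operator that replaces each value by the $\alpha$-twisted weighted barycentre of its $G^{(n)}$-neighbours, and prove energy and tension estimates showing that the flow either converges to a constant (and we are done) or, after rescaling by the ambient energy scale, converges along an ultrafilter to a nonconstant equivariant harmonic map into an ultralimit Hilbert space, for a limiting $C$-Lipschitz affine action. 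Pushing the estimates through this scaling ultralimit is the technical heart.

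To eliminate the harmonic map I would carry out the discrete Bochner / Garland--\.Zuk computation on $G^{(n)}$: harmonicity bounds the global energy $\sum_{e}\|df(e)\|^2$ by the product of the spectral factor governed by $\lambda_1(G^{(n)},\R)=1-(1-\lambda_1(G,\R))^n$ and a factor measuring the distortion introduced by the $\alpha$-twist at the scale of the local comparison. Diverging girth is used twice here: it allows $n$ to be chosen as large as needed while balls of the relevant bounded ($j$-independent) radius in $G^{(n)}$ remain tree-like with $\alpha$ realizing reduced words, so the distortion factor stays close to its isometric value; and, since $\lambda_1(G^{(n)},\R)\to 1$ as $n\to\infty$, the spectral factor can be pushed past any distortion coming from the fixed constant $C$. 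The product becomes strictly less than $1$, forcing $df\equiv 0$, so $f$ is constant --- a contradiction. (For $C=1$ this is precisely the spectral argument showing that random groups in the graph model have property (T).)

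The main obstacle is the middle step. Because $\rho$ is genuinely non-isometric on $\hilbert$, the classical energy-minimization-and-scaling machinery does not apply, and the harmonic map must be extracted from the tension-contracting flow. The delicate points are: securing energy and tension bounds along the flow that are uniform enough to survive the ultralimit; ruling out that the rescaled limit collapses to a point, so that it is genuinely nonconstant; and verifying that the limit map is harmonic and equivariant for the limiting action. Establishing these quantitative estimates --- and, in \cite{IzekiKondoNayataniprep}, pushing them through when the Lipschitz constants are allowed mild growth along a length function --- is exactly where Gromov's method has to be refined.
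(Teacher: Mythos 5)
Your sketch follows essentially the same route as the paper, which does not prove Gromov's theorem directly but obtains it as the $\sigma=0$ case of Theorem \ref{main_fixed_point_theorem2}: assuming no fixed point, one uses Gromov's tension-contracting flow together with a scaling ultralimit (Theorem \ref{existence}, via Propositions \ref{prop:monotone_decreasing} and \ref{stable_to_harmonic}) to produce a nonconstant harmonic equivariant map for a limiting uniformly $C$-Lipschitz affine action, and then derives a contradiction from the spectral gap and the diverging girth. The only difference is presentational: where you invoke a Garland--\.Zuk inequality on the power graph $G^{(n)}$, the paper pits a linear-in-$n$ lower bound on the $n$-step energy of a harmonic map (Lemma \ref{growth} with $\sigma=0$) against the transplanted Poincar\'e upper bound $E^{(n)}(f)(x)\lesssim_{C,\lambda}E(f)(x)$ (Lemma \ref{nongrowth}), and your version must likewise be routed through this $n$-step energy growth for barycentre-harmonic maps rather than through the first-variation identity, which---as you correctly note---is exactly what breaks for non-isometric actions.
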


We can relax the condition that the Lipschitz constants of the relevant affine maps should be 
uniformly bounded. 
To state our result precisely, we introduce the following 

\begin{Definition}
Let $\Gamma$ be a finitely generated group equipped with a finite, symmetric generating set $S$, 
and let $l\colon \Gamma\rightarrow \Z_{\geq 0}$ denote the word-length function with respect to $S$. 
For each conjugacy class $c$ of $\Gamma$, we define
$$
l_{\mathrm{conj}}(c) = \inf_{\gamma\in c} l(\gamma)
$$
and call $l_{\mathrm{conj}}\colon \{ \mbox{conjugacy classes of $\Gamma$} \}\rightarrow \Z_{\geq 0}$
the {\em conjugacy-length function} of $\Gamma$ \cite{CoornaertKnieper}. 
\end{Definition}

We now state

\begin{Theorem}\label{main_fixed_point_theorem2} 
Fix $C>0$ and $0\leq \sigma<1/10$. 
Let $\Gamma$ be a random group in the graph model associated with a sequence of expanders 
with diverging girth and diameter growing at most linearly in girth. 
Then any affine action $\rho\colon \Gamma\curvearrowright \mathcal{H}$ 
satisfying 
\begin{equation}\label{mildgrowth}
\forall \gamma\in \Gamma,\,\, \|A(\gamma)\|\leq C\,l_{\mathrm{conj}}([\gamma])^\sigma, 
\end{equation}
where $[\gamma]$ denotes the conjugacy class containing $\gamma$, has a fixed point. 
\end{Theorem}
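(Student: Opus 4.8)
The plan is to follow the strategy advertised in the introduction: produce a $\rho$-equivariant \emph{discrete harmonic map} $f\colon \Gamma\to\hilbert$ and then argue that the only way such a map can exist, given the spectral properties of the random group, is for it to be constant, whence its common value is a fixed point of $\rho$. The overall skeleton is the one used for isometric actions on CAT($0$) spaces in \cite{IzekiKondoNayatani2}, but with two essential modifications forced by the non-isometric setting. First, because a local energy minimizer need not be harmonic when the maps $\rho(\gamma)$ are merely affine, the existence of the equivariant harmonic map cannot be obtained by energy minimization; instead one runs Gromov's discrete tension-contracting flow \cite{Gromov2}, starting from an equivariant map minimizing local energy (whose existence is the content of the Appendix), and extracts a harmonic limit. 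Second, the scaling/ultralimit argument that linearizes the geometry must be carried out with the Lipschitz bound \eqref{mildgrowth} in force, and this is where the hypotheses $\sigma<1/10$ and ``diameter growing at most linearly in girth'' enter.

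Step by step, I would proceed as follows. (1) Fix an affine action $\rho$ satisfying \eqref{mildgrowth} and suppose for contradiction that it has no fixed point; equivalently, no equivariant map $\Gamma\to\hilbert$ is constant. (2) For a random labelling $\alpha$ of $G_j$ with $j$ large, present $\Gamma=\Gamma_\alpha$ on the generating set coming from $F_m$ and set up the combinatorial Laplacian/energy functional on equivariant maps; invoke the Appendix to get an equivariant map $f_0$ minimizing local energy. (3) Apply the discrete tension-contracting flow of \S 4 to $f_0$; the key quantitative input is that the expander hypothesis (iii) gives a spectral gap $\lambda_1(G_j,\R)\ge\lambda$ uniformly, which makes the flow contract the ``tension'' (the defect from harmonicity) at a definite rate, so that after passing to a scaling ultralimit of the flowed maps one obtains a genuinely harmonic equivariant map $f$ into an asymptotic Hilbert space. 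Here the diverging girth is used, exactly as in \cite{IzekiKondoNayatani2}, to locally identify large balls in the Cayley graph of $\Gamma_\alpha$ with balls in the graph $G_j$, so that harmonicity on $G_j$ transfers to harmonicity on $\Gamma$. (4) Run the Bochner/Garland-type argument at the links: connectivity of the link graphs together with the spectral gap forces the harmonic map to be constant on each link, hence (by connectedness of the Cayley graph) globally constant — contradicting (1). (5) Unwind the contradiction: the assumed fixed-point-free $\rho$ cannot exist, so with probability tending to one as $j\to\infty$ every $\rho$ satisfying \eqref{mildgrowth} has a fixed point.

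The main obstacle — and the reason for the numerical constraint $\sigma<1/10$ — is controlling the growth of the norms $\|A(\gamma)\|$ along the flow and through the ultralimit. In the uniformly Lipschitz case (Theorem \ref{main_fixed_point_theorem1}) these norms are bounded by $C$, so re-norming reduces everything to an isometric action on an equivalent Banach space and the classical machinery applies verbatim; under \eqref{mildgrowth} no such reduction is available, and one must instead track how the affine cocycle distorts distances over paths of length comparable to $\girth(G_j)$. The bound $\|A(\gamma)\|\le C\,l_{\mathrm{conj}}([\gamma])^\sigma$ is tailored so that along a cycle of length $\ell\approx\girth(G_j)$ the accumulated distortion is at most polynomial of degree $\sigma$ in $\ell$, while the energy-decrease from the spectral gap is linear in $\ell$; the requirement that the diameter grow at most linearly in the girth guarantees that conjugacy lengths of the relevant group elements are themselves $O(\girth(G_j))$, so the two estimates can be balanced. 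Making the flow's contraction survive this polynomial loss — and verifying that the ultralimit map is still harmonic and equivariant for the limiting (still affine, possibly now isometric) action — is the technical heart of \cite{IzekiKondoNayataniprep}; the exponent $1/10$ is the margin that comes out of combining the refined tension-contraction estimate with the ultralimit rescaling.
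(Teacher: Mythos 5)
Your first half is on target: assuming no fixed point, one produces a nonconstant equivariant harmonic map via Gromov's tension-contracting flow coupled with a scaling ultralimit (this is Theorem \ref{existence}), precisely because energy minimization alone does not yield harmonicity for non-isometric affine actions. (Two small corrections there: the paper does not feed the Appendix's local-energy minimizer into the flow --- the Appendix is explicitly set aside as not useful for this purpose --- and the existence of the harmonic map is a statement about an \emph{arbitrary} finitely generated group; the expander spectral gap plays no role in that step.)

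The genuine gap is in your step (4), the mechanism forcing the harmonic map to be constant. You propose a Bochner/Garland-type argument on the link graphs $\mathcal{L}(S)$; that is the technique for the Gromov \emph{density} model (\.Zuk, Nowak, Corollary \ref{nowak_corollary}) and it is not what works here --- for the graph model the generating set is just the free basis of $F_m$ and its inverses, and there is no spectral control on links; the spectral input is the global gap $\lambda_1(G_j,\R)\ge\lambda$ of the expanders, which enters through an $n$-step Poincar\'e inequality transplanted onto the group via the random labelling. The actual contradiction is between two polynomial bounds on the local $n$-step energy: Lemma \ref{growth} shows that a nonconstant harmonic map satisfies $E^{(n)}(f)(x)\gtrsim n^{1-2\sigma}E(f)(x)$, while Lemma \ref{nongrowth} (Silberman's transplantation argument, with the extra factor $\diam(G_j)^{4\sigma}\lesssim l^{8\sigma}$ coming from the non-isometric cocycle and the linear diameter-vs-girth hypothesis) gives $E^{(n)}(f)(x)\lesssim n^{8\sigma}E(f)(x)$ for arbitrarily large $n$. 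These are incompatible exactly when $1-2\sigma>8\sigma$, i.e.\ $\sigma<1/10$; so the threshold is not a ``margin from the tension-contraction estimate'' but the explicit crossing point of these two exponents. Without the $n$-step energy comparison your argument has no route from harmonicity to constancy and no way to produce the $1/10$.
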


%conjugacy-lengthをlengthに置き換えられるかは今後の課題であることをコメントした方がよい。

\begin{Remark}
In order for a random group in the graph model to be non-elementary hyperbolic 
(hence infinite), the relevant sequence of expanders should satisfy some further 
conditions (cf.~ \cite{Gromov2, Ghys, ArzhantsevaDelzant}). 
One of these conditions implies the diameter growth condition in Theorem 
\ref{main_fixed_point_theorem2}, which is therefore essentially superficial. 
\end{Remark}

%\begin{Remark}
%It is expected to follow from a result of V.\! Lafforgue (2008) that 
%any non-elementary hyperbolic group $\Gamma$ admits an affine action without fixed point 
%on a Hilbert space such that 
%$$
%\| A(\gamma) \|\leq \mathrm{polynomial}(\mathrm{word\, length} (\gamma)).%,\,\, \gamma\in \Gamma. 
%$$
%\end{Remark}

\section{Discrete harmonic maps} 

Let $\Gamma$ be a finitely generated group and fix a finite, symmetric generating set $S$. 
Let $\mu$ be the standard random walk of $\Gamma$ associated with $S$, that is, 
$$
\mu(x\to x')\defineeq \left\{\begin{array}{cl} 1/\# S & \mbox{if $\exists s\in S$, $x'=xs$},\\
0 & \mbox{otherwise}.\end{array}\right.
$$
The {\em barycenter map} of $\hilbert$, $\bary\colon \{ \mbox{finite-support probability measures 
on $\mathcal{H}$} \} \rightarrow \hilbert$, is given by 
\begin{equation}\label{affine_barycenter}
\bary\left( \sum_{i=1}^m t_i \Dirac(\mathbf{v}_i) \right) = \sum_{i=1}^m t_i \mathbf{v}_i. 
\end{equation}
Let $\rho\colon \Gamma\curvearrowright \mathcal{H}$ be an 
%uniformly $C$-Lipschitz 
affine action. 

\begin{Definition}
A $\rho$-equivariant map $f\colon \Gamma\rightarrow \mathcal{H}$ is called {\em harmonic} if 
it satisfies 
\begin{equation}\label{harmonic}
\bary(f_*\mu(x\to \sbt)) = f(x)
\end{equation}
for all $x\in \Gamma$. Notice that 
$$
\bary(f_*\mu(x\to \sbt)) = \frac{1}{\# S}\sum_{s\in S} f(xs). 
$$
\end{Definition}

\begin{Remark}\label{role-of-affinity}
Since the action $\rho$ is affine, we may conclude that a $\rho$-equivariant $f$ is harmonic 
if it satisfies \eqref{harmonic} for {\em some} $x\in \Gamma$. 
To see this, suppose that \eqref{harmonic} holds for $x$, and write any $x'\in \Gamma$ as $x'=\gamma x$. 
Then
%since
%$$\bary(f_*\mu(x\to \sbt)) = \frac{1}{\# S}\sum_{s\in S} f(xs),$$
\begin{eqnarray*}
\bary(f_*\mu(x'\to \sbt)) &=& \frac{1}{\# S}\sum_{s\in S} f(x's) 
= \frac{1}{\# S}\sum_{s\in S} \rho(\gamma)(f(xs))\\ 
&=& \rho(\gamma) \left( \frac{1}{\# S}\sum_{s\in S} f(xs) \right) 
= \rho(\gamma) f(x)\\ 
&=& f(x'),  
\end{eqnarray*}
%\begin{eqnarray*}
%\bary(f_*\mu(x'\to \sbt)) &=& \bary((\rho(\gamma)\circ f)_* \mu(x\to \sbt))\\ 
%&=& \frac{1}{\# S}\sum_{s\in S} \rho(\gamma)(f(xs))\\ 
%&=& \rho(\gamma) \left( \frac{1}{\# S}\sum_{s\in S} f(xs) \right)\\ 
%&=& \rho(\gamma) ( \bary(f_*\mu(x\to \sbt))\\ 
%&=& \rho(\gamma) f(x) = f(x'),  
%\end{eqnarray*}
and \eqref{harmonic} holds for $x'$, too. 
\end{Remark}

The action $\rho$ has a fixed point if and only if a $\rho$-equivariant constant map, 
which are trivially harmonic, exists. 
In contrast, we have the following existence result for nonconstant harmonic maps 
when $\rho$ has no fixed point.  

\begin{Theorem}\label{existence}
Let $\Gamma$ be a finitely generated group equipped with a finite, symmetric generating set $S$, 
and let $\rho\colon \Gamma\curvearrowright \mathcal{H}$ be an affine action, 
where $\mathcal{H}$ is a Hilbert space, satisfying \eqref{mildgrowth} for some $C>0$ and $\sigma\geq 0$. 
Suppose that $\rho(\Gamma)$ has no fixed point. 
Then there exist a (possibly new) affine action $\rho'\colon \Gamma\curvearrowright \hilbert'$, 
where $\hilbert'$ is a (possibly new) Hilbert space, satisfying \eqref{mildgrowth} 
for the same $C$, $\sigma$ as above 
and a nonconstant harmonic $\rho'$-equivariant map $f\colon \Gamma \rightarrow \mathcal{H}'$. 
\end{Theorem}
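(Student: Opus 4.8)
The plan is to produce the desired harmonic map as a scaling ultralimit of a sequence of maps obtained by running Gromov's discrete tension-contracting flow, adapting the energy-minimization-plus-ultralimit scheme that works in the isometric case. First I would fix a $\rho$-equivariant map $f_0\colon \Gamma\to\hilbert$ of finite local energy (which exists by the appendix, where maps minimizing local energy are constructed), and start the discrete tension-contracting flow from $f_0$. Because the action is affine rather than isometric, the flow is not a gradient flow of a convex energy, so its orbit need not converge; instead one tracks two quantities along the flow: the local energy $E(f_t)=\tfrac{1}{\#S}\sum_{s\in S}\|f_t(xs)-f_t(x)\|^2$ at (any, by equivariance) base point $x$, and the tension field $\tau(f_t)(x)=\bary((f_t)_*\mu(x\to\sbt))-f_t(x)$, whose vanishing is exactly harmonicity. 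Gromov's flow is designed so that $\|\tau(f_t)\|$ is contracted while the energy stays controlled; the mild-growth hypothesis \eqref{mildgrowth} is what keeps the equivariance-induced error terms (coming from $\|A(\gamma)\|$ over the generators and short cycles) from destroying this control.

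Next I would extract, along a time sequence $t_k\to\infty$, maps $f_k$ with $\|\tau(f_k)\|\to 0$ but with $E(f_k)$ bounded below away from zero — the latter because $\rho$ has no fixed point, so no equivariant map can be (nearly) constant; quantitatively one uses the Poincaré-type inequality on the link together with the no-fixed-point assumption to get a uniform lower bound on energy for all equivariant maps. Rescale each $f_k$ by dividing the metric on the target by $\sqrt{E(f_k)}$ (equivalently, rescale the Hilbert norm), so the normalized maps $\tilde f_k$ have energy exactly $1$ and tension tending to $0$. Passing to a nonprincipal ultralimit of the pointed rescaled targets $(\hilbert,\,d/\sqrt{E(f_k)},\,f_k(e))$, I would obtain a limit Hilbert space $\hilbert'$: the ultralimit of Hilbert spaces with rescaled inner products is again a Hilbert space, the affine actions $\rho(\gamma)$ pass to the limit as affine maps $\rho'(\gamma)$, and the bound $\|A(\gamma)\|\le C\,l_{\mathrm{conj}}([\gamma])^\sigma$ — being scale-invariant since it concerns the linear parts — survives the ultralimit, giving $\rho'$ satisfying \eqref{mildgrowth} with the same $C,\sigma$. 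The ultralimit map $f=\lim_\omega \tilde f_k\colon\Gamma\to\hilbert'$ is $\rho'$-equivariant, has energy $1$, hence is nonconstant, and has zero tension, hence is harmonic.

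The main obstacle, and the step I would spend the most care on, is establishing the two quantitative estimates that make the flow usable in the non-isometric setting: (a) a uniform-in-time lower bound on the local energy along the flow (or at least along the extracted subsequence), and (b) control of the energy's growth along the flow so that after rescaling it does not blow up or degenerate — both must be made compatible with the $l_{\mathrm{conj}}^\sigma$ growth of the linear parts, which is precisely why the threshold $\sigma<1/10$ and the diameter-growth hypothesis on the expander sequence enter. Concretely, the flow moves $f_t(x)$ toward the barycenter of its neighbors' images, and to iterate equivariantly one must re-synchronize using the $\rho(\gamma)$'s; each re-synchronization costs a factor governed by $\|A(\gamma)\|$ over group elements of length comparable to the girth, and the diameter-growing-linearly-in-girth condition bounds how long these elements need to be. Balancing the contraction rate of $\|\tau\|$ against this multiplicative drift is where Gromov's method must be refined: one needs the contraction to beat the drift over the relevant time scale, and the computation that this happens exactly when $\sigma<1/10$ is the technical heart of the argument. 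Once these estimates are in hand, the ultralimit construction and verification that $f$ is nonconstant harmonic and $\rho'$ satisfies \eqref{mildgrowth} are routine.
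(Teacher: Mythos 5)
Your proposal correctly identifies the two main ingredients (Gromov's averaging/tension-contracting flow and a scaling ultralimit), but it glosses over the point that is the actual substance of the paper's argument. You assert that along the flow ``$\|\tau(f_t)\|$ is contracted while the energy stays controlled'' and then extract $f_k$ with $\|\tau(f_k)\|\to 0$. What Proposition \ref{prop:monotone_decreasing} gives is only the \emph{non-strict} monotonicity $\|\Delta Hf(x)\|\leq \max_{x'\in x(S\cup\{e\})}\|\Delta f(x')\|$; whether a genuine contraction factor $\lambda<1$ eventually holds is precisely the definition of harmonic stability, and it can fail. The paper's proof is organized around this dichotomy (Proposition \ref{stable_to_harmonic}): if $f$ is stable, the iterates $f_i=H^if$ converge \emph{pointwise in the original Hilbert space with the original action} to a harmonic map, and no rescaling or ultralimit is needed; if $f$ is not stable, one exploits the rigidity clause of Proposition \ref{prop:monotone_decreasing} (equality forces $\Delta f$ to be a constant, $A(\Gamma)$-invariant vector) together with a rescaled ultralimit to manufacture a nonconstant harmonic map on a new space. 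Your write-up assumes the favorable alternative without justification and never confronts the unstable case, which is where the work lies.

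Two further points are incorrect as stated. First, your uniform lower bound on energy ``via the Poincar\'e-type inequality on the link'' is not available here: Theorem \ref{existence} is asserted for an \emph{arbitrary} finitely generated group with no spectral-gap hypothesis, and the infimum of the local energy over $\rho$-equivariant maps can be $0$ even when $\rho(\Gamma)$ has no fixed point; this is exactly Case 2 of Proposition \ref{minimizer} in the appendix, which requires normalizing by the displacement $\delta(\mathbf{v}_j)$ at carefully chosen almost-minimizing points rather than by $\sqrt{E(f_k)}$. Without a lower energy bound your rescaling step does not guarantee that the ultralimit map is nonconstant, nor that the rescaled tension still tends to zero. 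Second, you import $\sigma<1/10$, the expander sequence, and the diameter-growth condition into this proof; those hypotheses belong to Theorem \ref{main_fixed_point_theorem2} and enter only in balancing Lemma \ref{growth} ($E^{(n)}\gtrsim n^{1-2\sigma}E$) against Lemma \ref{nongrowth} ($E^{(n)}\lesssim n^{8\sigma}E$), i.e.\ $1-2\sigma>8\sigma$. They play no role in Theorem \ref{existence}, which holds for every $\sigma\geq 0$, so the ``technical heart'' you describe is not part of this statement's proof.
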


Before discussing the actual proof, we observe that the standard approach via energy minimization 
coupled with scaling ultralimit argument would fail.

\begin{Definition}\label{def-of-local-energy}
For a $\rho$-equivariant map $f\colon \Gamma\rightarrow \mathcal{H}$ and $x\in \Gamma$, 
define the {\em local energy} $E(f)(x)$ of $f$ at $x$ by
\begin{equation}\label{energy}
E(f)(x) \defineeq \frac{1}{2} \sum_{x'\in \Gamma} \|f(x)-f(x')\|^2\, \mu(x\to x'). 
\end{equation}
\end{Definition}

As will be verified in the appendix, under the assumption that $\rho(\Gamma)$ has no fixed point, 
one can always find a nonconstant $\rho$-equivariant map $f\colon \Gamma\rightarrow \hilbert$ 
minimizing the local energy at $x$, 
though the Hilbert space $\hilbert$ and the affine action 
$\rho\colon \Gamma\curvearrowright \mathcal{H}$ should possibly be renewed. 
We now focus on the question whether the map $f$ satisfies \eqref{harmonic} for $x$. 
%For simplicity, we set $f:=f_\infty$ and omit the prime symbol ($\mbox{}^\prime$) from the notation. 
For $\mathbf{v}\in \hilbert$ and $t\in \R$, let $f_t\colon \Gamma\rightarrow \hilbert$ 
be the $\rho$-equivariant map such that $f_t(e) = f(e) + t \mathbf{v}$. 
Then we have, taking $x=e$ for simplicity, 
\begin{eqnarray*}
E(f_t)(e) &=& \frac{1}{2 \# S} \sum_{s\in S} \|f_t(e)-f_t(s)\|^2\\ 
&=& \frac{1}{2 \# S} \sum_{s\in S} \|f(e)+t\mathbf{v}-\rho(s)(f(e)+t\mathbf{v})\|^2\\ 
%&=& \frac{1}{2 \# S} \sum_{s\in S} \|f(e)+t\mathbf{v}-\rho(s)(f(e)) - t A(s)(\mathbf{v})\|^2\\ 
&=& \frac{1}{2 \# S} \sum_{s\in S} \|(f(e)-f(s)) + t(\mathbf{v}-A(s)(\mathbf{v}))\|^2\\ 
&=& \frac{1}{2 \# S} \sum_{s\in S} \left( \|(f(e)-f(s))\|^2 
+ 2t \left\langle f(e)-f(s), \mathbf{v}-A(s)(\mathbf{v}) \right\rangle + O(t^2) \right), 
\end{eqnarray*}
and therefore 
$$
0 = \frac{d}{dt} E(f_t)(e)|_{t=0} = \frac{1}{\# S} \sum_{s\in S}  \left\langle f(e)-f(s), 
\mathbf{v}-A(s)(\mathbf{v}) \right\rangle. 
$$
If the action $\rho$ is isometric, which means that $A(s)$ is orthogonal, then 
\begin{eqnarray*}
\mbox{R.H.S.} &=& \frac{1}{\# S} \sum_{s\in S}  \left\langle f(e)-f(s), \mathbf{v} \right\rangle 
- \frac{1}{\# S} \sum_{s\in S}  \left\langle A(s^{-1})(f(e)-f(s)), \mathbf{v} \right\rangle\\ 
&=& \frac{1}{\# S} \sum_{s\in S}  \left\langle f(e)-f(s), \mathbf{v} \right\rangle 
- \frac{1}{\# S} \sum_{s\in S}  \left\langle f(s^{-1})-f(e), \mathbf{v} \right\rangle\\ 
&=& \frac{2}{\# S} \sum_{s\in S}  \left\langle f(e)-f(s), \mathbf{v} \right\rangle. 
\end{eqnarray*}
Since this vanishes for all $\mathbf{v}\in \hilbert$, we conclude \eqref{harmonic}. 
However, if $\rho$ is not isometric, the above computation fails and we would not be able 
to conclude \eqref{harmonic}, that is, that $f$ is harmonic. 

Instead, we use Gromov's discrete tension-contracting flow developed in \cite[\S 3.6]{Gromov2} 
and produce a harmonic $f$. 
Postponing the details to \cite{IzekiKondoNayataniprep}, we shall outline the argument 
for the proof of Theorem \ref{existence}. In the remainder of this section, 
let $\Gamma$ be a finitely generated group equipped with a finite, symmetric generating set $S$, 
and let $\rho\colon \Gamma\curvearrowright \mathcal{H}$ be an affine action, 
where $\mathcal{H}$ is a Hilbert space. 

For a $\rho$-equivariant map $f\colon \Gamma\rightarrow \mathcal{H}$, %and $0<\varepsilon< 1$, 
define new maps 
$Hf\colon \Gamma\rightarrow \mathcal{H}$ and 
$\Delta f\colon \Gamma\rightarrow \mathcal{H}$ by 
\begin{eqnarray*}
Hf (x) &\defineeq& \frac{1}{2} \left( \sum_{x'\in \Gamma} f(x')\, \mu(x \to x') + f(x) \right)\\ 
&=& \frac{1}{2} \left( \frac{1}{\# S}\sum_{s\in S} f(xs) + f(x) \right), 
\end{eqnarray*}
\begin{eqnarray*}
\Delta f (x) &\defineeq& (1-H)f (x) 
%= \varepsilon \left( f(x) - \int_\Gamma f(x')\, d\mu(x \to x') \right)
\\ 
&=& \frac{1}{2} \sum_{x'\in \Gamma} ( f(x) - f(x') )\, \mu(x \to x')\\ 
&=& \frac{1}{2 \# S} \sum_{s\in S} ( f(x) - f(xs) ). 
\end{eqnarray*}
%respectively. 
The maps $Hf$ and $\Delta f$ are $\rho$-equivariant and $A$-equivariant, respectively. 
We call $H$ (resp. $\Delta$) the {\em averaging operator} (resp. {\em Laplacian}). 
Note that $f$ is harmonic if and only if $\Delta f=0$, or $Hf=f$. 

\begin{Proposition}[cf.~Gromov \cite{Gromov2}]\label{prop:monotone_decreasing}
We have 
\begin{equation*}
\| \Delta H f (x) \|\leq 
\max_{x' \in x(S\cup \{e\})} \| \Delta f(x') \| 
\end{equation*}
for all $x\in \Gamma$, and if the equality sign holds for some $x$ 
%%%%%%%%%%%%%%%%%%%%%%%%%%%%%%%%
%for some x でよいか？証明を確認
%%%%%%%%%%%%%%%%%%%%%%%%%%%%%%%%
then $\Delta f (x)$ is a constant vector independent of $x\in \Gamma$. 
\end{Proposition}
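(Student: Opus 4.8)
The plan is to express everything in terms of the Laplacian $\Delta f$ and exploit the identity $Hf = f - \Delta f$ together with the fact that $H$ is an averaging operator. First I would write, for a fixed $x$,
\[
\Delta H f(x) = \frac{1}{2\#S}\sum_{s\in S}\bigl( Hf(x) - Hf(xs) \bigr),
\]
and then substitute $Hf = (1-\Delta)f$ into this. Using that $Hf(x)-Hf(xs) = \tfrac12(\Delta f(x)+\Delta f(xs)) \cdot(\text{something})$ — more precisely, since $H$ and $\Delta$ commute (both are built from the same averaging), one has $\Delta H f = H \Delta f$, so
\[
\Delta H f(x) = H(\Delta f)(x) = \frac{1}{2}\left( \frac{1}{\#S}\sum_{s\in S}\Delta f(xs) + \Delta f(x) \right).
\]
This exhibits $\Delta H f(x)$ as a convex combination (with weights $\tfrac{1}{2\#S}$ on each $\Delta f(xs)$ and $\tfrac12$ on $\Delta f(x)$) of the vectors $\Delta f(x')$ for $x' \in x(S\cup\{e\})$.

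Next I would apply the triangle inequality (or the convexity of the norm) to this convex combination: the norm of a convex combination of vectors is at most the maximum of their norms, which immediately gives
\[
\| \Delta H f(x) \| \;\le\; \max_{x'\in x(S\cup\{e\})} \|\Delta f(x')\|.
\]
For the equality case, I would invoke strict convexity of the Hilbert norm: equality in $\|\sum t_i \mathbf{w}_i\| \le \sum t_i\|\mathbf{w}_i\| \le \max_i\|\mathbf{w}_i\|$ (with all $t_i>0$ summing to $1$) forces all the $\mathbf{w}_i$ with $\|\mathbf{w}_i\|$ equal to the max to be a common vector, and in fact all $\mathbf{w}_i$ to be equal to that vector and to have that norm. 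Hence $\Delta f(x) = \Delta f(xs)$ for all $s\in S$, i.e. $\Delta f$ is constant on $x(S\cup\{e\})$.

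The remaining, and main, point is to propagate this local constancy to global constancy of $\Delta f$ on all of $\Gamma$. Here I would use $A$-equivariance of $\Delta f$, namely $\Delta f(\gamma y) = A(\gamma)\Delta f(y)$. The key observation is that if $\Delta f(x) = \Delta f(xs)$ then, writing $xs = sx'$ appropriately or rather using equivariance directly: from $\Delta f(xs) = \Delta f(x)$ and the fact that $\Delta f(x) = A(x)\Delta f(e)$ for all $x$, one gets $A(xs)\Delta f(e) = A(x)\Delta f(e)$, hence $A(x)(A(s)-1)\Delta f(e) = 0$, and since $A(x)$ is invertible this means $(A(s)-1)\Delta f(e)=0$, i.e. $A(s)$ fixes the vector $\mathbf{w}_0 := \Delta f(e)$ for every generator $s\in S$. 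As $S$ generates $\Gamma$, every $A(\gamma)$ fixes $\mathbf{w}_0$, so $\Delta f(\gamma) = A(\gamma)\mathbf{w}_0 = \mathbf{w}_0$ for all $\gamma$, which is the desired constancy. The delicate bookkeeping — keeping the convex-combination weights strictly positive so that strict convexity applies to every term, and correctly threading the equivariance relations $\Delta f(xs) = \Delta f(x)$ through the group — is where care is needed, but there is no serious analytic obstacle.
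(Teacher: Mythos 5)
Your proof is correct. The paper itself defers the proof of this proposition to the forthcoming reference, so there is nothing to compare against line by line, but your argument --- writing $H=\tfrac12(1+M)$ and $\Delta=\tfrac12(1-M)$ so that $\Delta Hf(x)=H\Delta f(x)$ is a convex combination with strictly positive weights of the vectors $\Delta f(x')$, $x'\in x(S\cup\{e\})$, then using strict convexity of the Hilbert norm in the equality case and finally the $A$-equivariance $\Delta f(\gamma)=A(\gamma)\Delta f(e)$ together with invertibility of $A(x)$ to get $A(s)\Delta f(e)=\Delta f(e)$ for all $s\in S$ and hence global constancy --- is exactly the natural argument the statement is designed for, and every step checks out.
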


Motivated by this proposition, we introduce the following 

\begin{Definition}[cf.~Gromov \cite{Gromov2}]
Let $f\colon \Gamma\rightarrow \mathcal{H}$ be a $\rho$-equivariant map, and define 
$f_0 := f$ and $f_{i+1} := H f_i$ inductively. 
We say that $f$ is {\em (harmonically) stable} if 
\begin{equation*}
0 < \exists \lambda < 1, \ 
\exists i_0 \in \N, \ 
%\text{ s.t. }
\forall i\geq i_0, \ 
%\in \N \text{ with } i\geq i_0, 
\forall x \in \Gamma, 
\ \| \Delta f_{i+1}(x) \|
    \leq \lambda \max_{x' \in x(S\cup \{e\})}
\| \Delta f_{i}(x') \|. 
\end{equation*}
\end{Definition}

It should be mentioned that the above definition of harmonic stability is slightly modified 
from Gromov's original one and it is more suitable for our purpose. 

\begin{Remark}\label{harmonic-is-stable} 
Suppose $f_{i_0}$ is harmonic, that is, $\Delta f_{i_0} = f_{i_0} - f_{i_0+1} = 0$ 
for some $i_0$. 
Then $f_{i} = f_{i_0}$ and thus $\Delta f_{i}=0$ for all $i\geq i_0$. 
Therefore, $f$ is stable. 
\end{Remark}

\begin{Proposition}\label{stable_to_harmonic} 
Suppose that $\rho$ satisfies \eqref{mildgrowth} for some $C>0$ and $\sigma\geq 0$. 

\noindent
{\rm (i)}\, 
If a $\rho$-equivariant map $f\colon \Gamma\rightarrow \mathcal{H}$ is stable, 
then 
%for any $x \in \Gamma$, $\{f_i (x)\}_{i\in \N}$ is a Cauchy sequence. 
$\{f_i\}_{i\in \N}$ converges pointwise to a map 
$f_{\infty} \colon \Gamma \rightarrow \mathcal{H}$, and $f_\infty$ is harmonic. 

\noindent
{\rm (ii)}\, 
If a $\rho$-equivariant map $f\colon \Gamma\rightarrow \mathcal{H}$ is not stable, 
then there exist a Hilbert space $\hilbert'$ 
%isometrically isomorphic to $\hilbert$ 
%(or its codimension one subspace if $\hilbert$ is finite-dimensional) 
and a nonconstant harmonic map $f'\colon \Gamma\rightarrow \hilbert'$, 
equivariant with respect to an affine action $\rho'\colon \Gamma\curvearrowright \hilbert'$ 
satisfying \eqref{mildgrowth} for the same $C$, $\sigma$ as above. 
\end{Proposition}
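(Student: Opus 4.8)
The plan is to treat the two parts separately, both building on Proposition~\ref{prop:monotone_decreasing}, which tells us that the quantity
$$
D_i(x) \defineeq \max_{x' \in x(S\cup\{e\})} \|\Delta f_i(x')\|
$$
is monotone non-increasing along the iteration in the appropriate sense, and that rigidity (equality) forces $\Delta f_i$ to be a constant vector on all of $\Gamma$.

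\medskip
\noindent
\emph{Proof of (i).}
Suppose $f$ is stable, with witnesses $\lambda<1$ and $i_0$. First I would observe that by Proposition~\ref{prop:monotone_decreasing}, for every $x$ and every $i\geq i_0$ we have $\|\Delta f_{i+1}(x)\| \leq \lambda D_i(x)$ while always $D_{i+1}(x) \leq D_i(x')$ for a suitable neighbor, so iterating the neighborhood bound gives geometric decay: there is a constant $K$ (depending on $x$, $i_0$, $\#S$) such that $\|\Delta f_i(x)\| \leq K \lambda^{i}$ for $i$ large. Hence $\sum_i \|\Delta f_i(x)\| < \infty$ for each fixed $x$. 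Next, since $f_{i+1}(x) - f_i(x) = -\Delta f_i(x)$ (directly from the definitions of $H$ and $\Delta$), the telescoping series $\sum_i (f_{i+1}(x)-f_i(x))$ converges absolutely in $\mathcal{H}$, so $f_i(x) \to f_\infty(x)$ pointwise. Equivariance passes to the limit because each $f_i$ is $\rho$-equivariant and $\rho(\gamma)$ is continuous. Finally, $\Delta f_\infty(x) = \lim_i \Delta f_i(x) = 0$ since $\|\Delta f_i(x)\|\to 0$; by the remark that $f$ is harmonic iff $\Delta f = 0$, $f_\infty$ is harmonic.

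\medskip
\noindent
\emph{Proof of (ii).}
Now suppose $f$ is not stable. Negating the definition of stability, for every $\lambda<1$ and every $i_0$ there exist $i\geq i_0$ and $x\in\Gamma$ with $\|\Delta f_{i+1}(x)\| > \lambda\, D_i(x)$. The idea is to extract, along a sequence $\lambda_k \uparrow 1$, a corresponding sequence of indices $i_k$ and base points $x_k$, use $\rho$-equivariance to translate each $x_k$ back to a bounded region (replace $x_k$ by $\gamma_k^{-1} x_k$ for suitable $\gamma_k$, which only relabels $f_{i_k}$ by $\rho(\gamma_k^{-1})$ and does not change the norms $\|\Delta f_{i_k}\|$ up to the controlled factor coming from $\eqref{mildgrowth}$), normalize the maps $f_{i_k}$ by subtracting the value at a base point and rescaling by $\bigl(D_{i_k}(x_k)\bigr)^{-1}$ so the gradient-type quantity has size $1$, and then pass to a scaling ultralimit. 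The limiting data will be a Hilbert space $\mathcal{H}'$, an affine action $\rho'$ still satisfying $\eqref{mildgrowth}$ with the same $C,\sigma$ (the Lipschitz bound is scale-invariant and ultralimits of operators of norm $\leq C\,l_{\mathrm{conj}}^\sigma$ again have norm $\leq C\,l_{\mathrm{conj}}^\sigma$), and a $\rho'$-equivariant limit map $f'$. The rescaled almost-equality $\|\Delta f_{i+1}(x)\| > \lambda D_i(x)$ becomes, in the limit, the equality $\|\Delta H f'(x_\infty)\| = \max_{x'\in x_\infty(S\cup\{e\})}\|\Delta f'(x')\|$ at the limit base point; Proposition~\ref{prop:monotone_decreasing} then forces $\Delta f'$ to be a \emph{constant} vector $\mathbf{w}$ on $\Gamma$, and the normalization guarantees $\mathbf{w}\neq 0$. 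A constant $\Delta f'$ means $f'(x) - Hf'(x) \equiv \mathbf{w}$; iterating gives $f'_{i} = f' - i\,\mathbf{w}$ up to lower-order terms, so after one more subtraction of the linear drift $i\mathbf{w}$ (or, cleaner: pass to the $A'$-equivariant map $\Delta f'$ itself, which is harmonic as a constant and nonconstant as a map only if we instead build $f'$ from the primitive of $\mathbf{w}$) one produces a genuinely nonconstant harmonic $\rho'$-equivariant map. The cleanest route here is to note that if $\Delta f'\equiv\mathbf{w}\neq0$ then $g := f'$ satisfies $Hg = g - \mathbf{w}$, and one checks that the shifted iterate stabilizes: replacing $g$ by a primitive construction makes $\Delta$ vanish while keeping the map nonconstant, because the nonvanishing of $\mathbf{w}$ obstructs $g$ from being constant.

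\medskip
\noindent
\textbf{Main obstacle.}
The delicate part is part (ii): executing the scaling ultralimit so that (a) the base points can genuinely be brought into a compact window without destroying the $\eqref{mildgrowth}$ bound — this is exactly where the conjugacy-length function, rather than the word length, is needed, since conjugating $\gamma$ by $\gamma_k$ does not change $l_{\mathrm{conj}}([\gamma])$ — and (b) the almost-extremal configuration survives the limit as an \emph{exact} extremal configuration, so that the rigidity clause of Proposition~\ref{prop:monotone_decreasing} applies. One must also verify that the normalizing constants $D_{i_k}(x_k)$ do not degenerate (go to $0$ or $\infty$) after the equivariant recentering, which again relies on the mild-growth control; and finally that the constant vector $\mathbf{w}$ obtained in the limit is nonzero, which is forced by the normalization $\|\Delta f_{i_k+1}(x_k)\| > \lambda_k D_{i_k}(x_k)$ together with $\lambda_k \to 1$. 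These ultralimit bookkeeping issues — not the algebra — are where the real work lies, and the details are deferred to \cite{IzekiKondoNayataniprep}.
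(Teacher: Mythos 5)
First, note that this announcement paper does not actually prove Proposition~\ref{stable_to_harmonic}; it defers both proofs to \cite{IzekiKondoNayataniprep}. So your proposal can only be judged on its own terms, and on those terms the architecture (geometric decay and telescoping for (i); negation of stability, recentering, scaling ultralimit, and the rigidity clause of Proposition~\ref{prop:monotone_decreasing} for (ii)) is the natural one. But there are two genuine gaps. In part (i), iterating the stability inequality gives $\|\Delta f_{i_0+k}(x)\|\leq \lambda^{k}\max_{x'\in B_k(x)}\|\Delta f_{i_0}(x')\|$ where $B_k(x)$ is the ball of radius $k$ about $x$, and this maximum is taken over a \emph{growing} set. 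Since $\Delta f_{i_0}$ is only $A$-equivariant, $\Delta f_{i_0}(x\gamma)=A(x\gamma x^{-1})\Delta f_{i_0}(x)$, so without a bound on $\|A\|$ this maximum can grow faster than $\lambda^{-k}$ and your constant $K$ does not exist. The hypothesis \eqref{mildgrowth} is exactly what rescues this: $\|A(x\gamma x^{-1})\|\leq C\,l_{\mathrm{conj}}([\gamma])^{\sigma}\leq C k^{\sigma}$, so the maximum grows only polynomially and $\sum_k \lambda^k k^\sigma<\infty$. You never invoke \eqref{mildgrowth} in (i), yet it is the reason the hypothesis appears in the statement.

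The more serious gap is the endgame of part (ii). Granting the ultralimit, you correctly arrive at a limit map $f'$ with $\Delta f'\equiv\mathbf{w}$ and $\|\mathbf{w}\|=1$ by the normalization; but then $f'$ is \emph{not} harmonic, and none of your proposed repairs works. Since $H$ commutes with adding constant vectors, $\Delta(f'-i\mathbf{w})=\Delta f'=\mathbf{w}$ for every $i$, so ``subtracting the linear drift'' changes nothing; and the constant map $\Delta f'$ is harmonic but constant, hence useless for the conclusion. The step you are missing: $A$-equivariance of $\Delta f'$ gives $A'(\gamma)\mathbf{w}=\mathbf{w}$ for all $\gamma$, so the line $\R\mathbf{w}$ is $A'$-invariant; pass to the quotient Hilbert space $\hilbert'/\R\mathbf{w}$ with the induced affine action (quotient operator norms do not increase, so \eqref{mildgrowth} persists with the same $C,\sigma$), and the composition $\pi\circ f'$ with the quotient map satisfies $\Delta(\pi\circ f')=\pi(\mathbf{w})=0$, i.e.\ is harmonic. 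It is nonconstant because if $f'$ took values in a line parallel to $\mathbf{w}$, equivariance would force $f'(\gamma)=f'(e)+c(\gamma)\mathbf{w}$ with $c\colon\Gamma\to\R$ a homomorphism, and then the symmetry of $S$ gives $\sum_{s\in S}c(s)=0$, hence $\Delta f'(e)=0$, contradicting $\mathbf{w}\neq 0$. Separately, be careful with the recentering: conjugating the action by $x_k$ (so that the linear parts become $A(x_k\gamma x_k^{-1})$, controlled precisely because $l_{\mathrm{conj}}$ is conjugation-invariant, as you note) is fine, but ``relabelling $f_{i_k}$ by $\rho(x_k^{-1})$'' applies the non-isometry $A(x_k)^{-1}$ to all the vectors $\Delta f_{i_k}(x')$ and can destroy the near-equality $\|\Delta f_{i_k+1}(x_k)\|>\lambda_k D_{i_k}(x_k)$ that you need to survive to the limit; and one must also check that the chosen normalizing constants actually control the full displacement $\|f_{i_k}(x_k y)-f_{i_k}(x_k)\|$ so that the ultralimit map is defined at every point of $\Gamma$ --- a bound on the norm of the averaged difference $\Delta f_{i_k}$ does not by itself bound the individual differences.
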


This proposition implies Theorem \ref{existence}. 
The proofs of the two propositions above will be given in \cite{IzekiKondoNayataniprep}. 

\section{%Outline of the 
Proof of Theorem \ref{main_fixed_point_theorem2}}

In this section, we prove Theorem \ref{main_fixed_point_theorem2}. 
Let $\Gamma$ be a finitely generated group equipped with a finite, symmetric generating 
set $S$. 
Let $\rho\colon \Gamma\curvearrowright \mathcal{H}$ be an affine action, 
where $\mathcal{H}$ is a Hilbert space, and suppose that $\rho$ satisfies 
\begin{equation}\label{eq:growth-new'}
\forall \gamma\in \Gamma,\,\, \| A(\gamma)\|\leq C\,l(\gamma)^{\sigma}
\end{equation} 
for some $C>0$ and $\sigma\geq 0$. 
(Note that this condition is weaker than \eqref{mildgrowth}.) 
For a $\rho$-equivariant map $f\colon \Gamma\rightarrow \mathcal{H}$ and $x\in \Gamma$, 
define the {\em local $n$-step energy} of $f$ at $x$ by
$$
E^{(n)}(f)(x) \defineeq \frac{1}{2} \sum_{x'\in \Gamma} \|f(x)-f(x')\|^2\, \mu^n(x\to x'),  
$$
where $\mu^n$ is the $n$-th convolution of $\mu$. 

\begin{Lemma}\label{growth} 
Suppose that $\sigma< 1/2$. 
Let $f\colon \Gamma\rightarrow \mathcal{H}$ be a harmonic $\rho$-equivariant map. 
Then we have 
%its local $n$-step energy satisfies 
$$
E^{(n)}(f)(x) \gtrsim_{C,\sigma,x} n^{1-2\sigma} E(f)(x) 
$$
for all $x\in \Gamma$.
\end{Lemma}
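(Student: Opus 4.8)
The plan is to convert the $n$-step energy of the harmonic map $f$ into a sum, taken along the random walk, of its one-step energies, and then to transport each of those one-step energies back to $x$ by equivariance, paying a price controlled by the growth hypothesis \eqref{eq:growth-new'}. Concretely, let $(X_k)_{k\ge0}$ be the random walk driven by $\mu$ with $X_0=x$, so that $E^{(n)}(f)(x)=\tfrac12\,\mathbb{E}\big[\|f(x)-f(X_n)\|^2\big]$ (all measures here have finite support, so no analytic subtlety arises). Harmonicity of $f$, i.e.\ $\Delta f=0$, says exactly that $\mathbb{E}[f(X_{k+1})\mid X_0,\dots,X_k]=f(X_k)$, so that $\big(f(X_k)\big)_k$ is an $\hilbert$-valued martingale; the Pythagorean theorem for martingale increments then gives
\[
E^{(n)}(f)(x)=\tfrac12\sum_{k=0}^{n-1}\mathbb{E}\big[\|f(X_{k+1})-f(X_k)\|^2\big]=\sum_{k=0}^{n-1}\mathbb{E}\big[E(f)(X_k)\big],
\]
since conditionally on $X_k$ the squared increment averages to $\tfrac1{\#S}\sum_{s\in S}\|f(X_ks)-f(X_k)\|^2=2E(f)(X_k)$. (Equivalently one can establish the identity $E^{(n+1)}(f)(x)-E^{(n)}(f)(x)=\sum_y E(f)(y)\,\mu^n(x\to y)$ directly from the parallelogram law and $\Delta f=0$.)

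Next I would bound $E(f)(X_k)$ from below in terms of $E(f)(x)$. Write $X_k=\gamma_k x$ with $\gamma_k=X_kx^{-1}$; then $\rho$-equivariance gives $f(X_ks)-f(X_k)=A(\gamma_k)\big(f(xs)-f(x)\big)$ for all $s\in S$. Since $\rho(\gamma_k)$ is invertible with $A(\gamma_k)^{-1}=A(\gamma_k^{-1})$, applying \eqref{eq:growth-new'} to $\gamma_k^{-1}$ (and using that the word length is symmetric) yields $\|A(\gamma_k)\mathbf{v}\|\ge\big(C\,l(\gamma_k)^{\sigma}\big)^{-1}\|\mathbf{v}\|$ for every $\mathbf{v}\in\hilbert$, hence
\[
E(f)(X_k)\ \ge\ \frac{1}{C^{2}\,l(\gamma_k)^{2\sigma}}\,E(f)(x).
\]
Because $X_k=x\,w_k$ where $w_k$ is a product of $k$ generators, so $l(w_k)\le k$, we have the deterministic bound $l(\gamma_k)\le 2l(x)+k\le\big(2l(x)+1\big)k$ for $k\ge1$, whence $l(\gamma_k)^{2\sigma}\le\big(2l(x)+1\big)^{2\sigma}k^{2\sigma}$. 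Feeding this into the identity of the first step and keeping only the terms $1\le k\le n-1$,
\[
E^{(n)}(f)(x)\ \ge\ \frac{E(f)(x)}{C^{2}\big(2l(x)+1\big)^{2\sigma}}\sum_{k=1}^{n-1}k^{-2\sigma}.
\]

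Finally, since $2\sigma<1$, I would compare $\sum_{k=1}^{n-1}k^{-2\sigma}$ with $\int_1^n t^{-2\sigma}\,dt=\tfrac{n^{1-2\sigma}-1}{1-2\sigma}$ to get $\sum_{k=1}^{n-1}k^{-2\sigma}\gtrsim_{\sigma}n^{1-2\sigma}$ for all $n\ge1$ (the case $n=1$ being trivial, as $E^{(1)}(f)=E(f)$), which delivers the asserted bound with implied constant depending only on $C$, $\sigma$ and $l(x)$. I expect no serious obstacle: the hypothesis $\sigma<1/2$ enters only in this last elementary estimate (for $\sigma\ge1/2$ the partial sums of $k^{-2\sigma}$ fail to grow like a positive power of $n$), and the two points that require care are (a) that $l(\gamma_k)$ must be bounded by a multiple of $k$ \emph{uniformly} in $k$, which is why the constant depends on $x$, and (b) that one invokes \eqref{eq:growth-new'} for $\gamma_k^{-1}$ to obtain the \emph{lower} bound $\|A(\gamma_k)\mathbf{v}\|\gtrsim l(\gamma_k)^{-\sigma}\|\mathbf{v}\|$ rather than the more obvious upper bound — this reversal is the heart of transporting the energy back to $x$.
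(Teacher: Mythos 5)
The paper does not actually prove this lemma --- it defers the proof to the forthcoming paper \cite{IzekiKondoNayataniprep} --- so there is no in-paper argument to compare against; judged on its own terms, your proof is correct, and it is surely the intended one. The two key ingredients are right: (a) harmonicity makes $(f(X_k))_k$ a Hilbert-space-valued martingale, so orthogonality of increments converts $E^{(n)}(f)(x)$ into $\sum_{k=0}^{n-1}\mathbb{E}\bigl[E(f)(X_k)\bigr]$ (equivalently, your telescoping identity, whose cross term vanishes by $\Delta f=0$); and (b) applying the growth hypothesis \eqref{eq:growth-new'} to $\gamma_k^{-1}$ rather than $\gamma_k$ yields the lower bound $\|A(\gamma_k)\mathbf{v}\|\geq (C\,l(\gamma_k)^{\sigma})^{-1}\|\mathbf{v}\|$, which is exactly where the loss $k^{-2\sigma}$, hence the exponent $1-2\sigma$ and the need for $\sigma<1/2$, come from; in the isometric case $\sigma=0$ your computation collapses to the familiar $E^{(n)}=nE$. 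One cosmetic point: when the walk returns to $x$, so that $\gamma_k=e$ and $l(\gamma_k)=0$, the intermediate display with $l(\gamma_k)^{2\sigma}$ in the denominator is ill-formed (the same convention issue is already present in the hypothesis \eqref{eq:growth-new'} at $\gamma=e$); but in that case $E(f)(X_k)=E(f)(x)$ exactly, and since $C\geq 1$ and $k\geq 1$ the weakened bound with denominator $C^{2}(2l(x)+1)^{2\sigma}k^{2\sigma}$ still holds, so nothing is lost.
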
 

The proof of this lemma will be given in \cite{IzekiKondoNayataniprep}. 

So far, the group $\Gamma$ has been any finitely generated group. 
The following lemma, essentially due to Gromov and Silberman \cite{Gromov2, Silberman1}, 
concerns a random $\Gamma$. 

\begin{Lemma}\label{nongrowth} 
Let $\Gamma$ be a random group in the graph model associated with a sequence of expanders 
with diverging girth and diameter growing at most linearly in girth, 
%Let $\Gamma$ be a random group as in Theorem \ref{main_fixed_point_theorem2}, 
and let $\rho\colon \Gamma\curvearrowright \hilbert$ be an affine action as above. 
Then for any $\rho$-equivariant map $f\colon \Gamma\rightarrow \mathcal{H}$ 
and any $x\in \Gamma$, we have 
\begin{equation}\label{nstepenergy-upperbound}
E^{(n)}(f)(x)\lesssim_{C,\sigma,x,\lambda} n^{8\sigma} E(f)(x). 
\end{equation}
Here, $n$ is a positive integer depending on $f$ and $x$, and we may assume that $n$ 
is arbitrarily large, and $\lambda$ is the positive constant as in the definition 
of a sequence of expanders. 
%, which depends on $f$ and $x$ but may be chosen arbitrarily large. 
\end{Lemma}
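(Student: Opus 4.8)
The plan is to follow the strategy of Gromov and Silberman for proving property (T) of a random group in the graph model, but carefully tracking how the non-isometry of $\rho$, controlled by the exponent $\sigma$ via \eqref{eq:growth-new'}, degrades the estimates. First I would recall the basic mechanism: for a random labelling $\alpha$ of $G_j$ with large girth, any ball of radius comparable to $\girth(G_j)/2$ in the Cayley graph of $\Gamma_\alpha$ is isometric to the corresponding ball in the free group, and the labelling $\alpha\colon \overrightarrow{E}(G_j)\to S$ lifts combinatorial random walks of length $n\lesssim \girth(G_j)$ on $G_j$ to genuine random-walk paths in $\Gamma_\alpha$. Thus, given a $\rho$-equivariant map $f\colon \Gamma\to\mathcal{H}$ and a base point $x$, pushing forward $f$ along $\alpha$ produces a map $F_v\colon V_j\to\mathcal{H}$ (roughly $F_v(u) = f(x\,\alpha(\overrightarrow{p}_{v_0\to u}))$ after fixing a base vertex $v_0$ and a spanning tree), and the $n$-step energy $E^{(n)}(f)(x)$ is comparable, up to the Lipschitz distortion coming from \eqref{eq:growth-new'}, to the Dirichlet-type energy of $F_v$ computed against the $n$-step transition kernel of the random walk on $G_j$.

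The heart of the argument is then the spectral gap: because $\lambda_1(G_j,\R)\geq\lambda>0$, the $n$-step random walk on $G_j$ mixes, so $\sum_{u,u'} \|F_v(u)-F_v(u')\|^2 \mu^n_{G_j}(u\to u')$ is bounded above, uniformly in $n$, by a constant multiple (depending on $\lambda$) of the one-step energy $\sum_{u\sim u'}\|F_v(u)-F_v(u')\|^2$. Translating back through the labelling: the one-step energy of $F_v$ corresponds to $E(f)(x)$ up to bounded factors, and the $n$-step energy of $F_v$ corresponds to $E^{(n)}(f)(x)$. The only place where $\sigma$ enters is in the back-and-forth translation between energies of $f$ on $\Gamma$ and energies of the lifted map $F_v$ on $G_j$: identifying $f(x\gamma)$ for $\gamma$ of word length up to $\ell\sim n$ requires transporting by operators $A(\gamma)$ of norm $\lesssim C\,\ell^{\sigma}\lesssim C\,n^\sigma$, and this distortion occurs on both sides of the spectral-gap inequality, producing a total factor $n^{O(\sigma)}$. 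Bookkeeping the exponents in the Gromov–Silberman comparison (where each of the finitely many lift/restrict/average steps contributes a bounded power of $n^\sigma$) yields the stated exponent $8\sigma$; the integer $n$ is chosen on the order of $\girth(G_j)$ for $j$ large, which also explains why $n$ may be taken arbitrarily large and why the diameter-growth hypothesis is used — it guarantees that a single random walk of length $\sim\girth(G_j)$ already sees a definite fraction of $G_j$, so the mixing estimate applies with constants depending only on $\lambda$.

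I would organize the write-up as: (1) fix $j$ large, a base vertex $v_0\in V_j$, a spanning tree and the induced identification of $V_j$ with a set of group elements of word length $\lesssim \diam(G_j)\lesssim\girth(G_j)$; (2) for the given $f$ and $x$, define $F_v\colon V_j\to\mathcal{H}$ and record the two-sided comparison $E(f)(x)\asymp_{C,\sigma,x} \mathcal{E}^{(1)}(F_v)$ and $E^{(n)}(f)(x)\asymp_{C,\sigma,x,n^\sigma} \mathcal{E}^{(n)}(F_v)$, where $\mathcal{E}^{(k)}$ is the $k$-step graph energy on $G_j$ and the distortion is at most $n^{O(\sigma)}$; (3) apply the spectral-gap bound $\mathcal{E}^{(n)}(F_v)\lesssim_\lambda \mathcal{E}^{(1)}(F_v)$, valid for all $n$; (4) chain the three estimates and count the powers of $n^\sigma$ to get \eqref{nstepenergy-upperbound}. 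The main obstacle — and the step deserving the most care — is step (2): one must verify that the random labelling really does realize $n$-step walks on $G_j$ as honest random-walk paths of length $n$ in $\Gamma_\alpha$ without collapse for $n$ up to a fixed fraction of the girth (this is where the graph-model probabilistic input and the diverging-girth/diameter hypotheses are consumed), and one must be scrupulous that every invocation of \eqref{eq:growth-new'} transports by an operator whose norm is bounded by $C\,n^\sigma$ rather than by some larger length, so that the exponent does not blow up beyond $8\sigma$. The spectral-gap step (3) is, by contrast, a standard and clean consequence of $\lambda_1(G_j,\R)\geq\lambda$.
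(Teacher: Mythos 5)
Your overall strategy --- transplanting the spectral-gap energy inequality on $G_j$ onto the group via the labelling and tracking the distortion caused by \eqref{eq:growth-new'} --- is the same as the paper's, and your step (3) is indeed the clean part. But there is a genuine gap at the step you yourself flag as the main obstacle, namely the claimed comparison $E^{(n)}(f)(x)\asymp \mathcal{E}^{(n)}(F_v)$ in your step (2). This is false for a fixed $n$: pushing the $n$-step walk on $G_j$ forward along $\alpha$ does not produce the $n$-step walk on the Cayley graph, since paths of length $n$ in $G_j$ map to words that reduce to various lengths $l\leq n$, and for a single labelling the resulting measure has no a priori relation to any convolution power $\mu_X^l$. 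The paper handles this by (i) averaging the pushforward over all starting vertices $u\in V_j$ with weights $\nu_{G_j}(u)$ to obtain a walk $\mu_{G_j,\alpha}^n$ on $F_m$, (ii) computing that the expectation of $\alpha\mapsto \mu_{G_j,\alpha}^n$ is a convex combination $\sum_l w^{(n)}_{\,\,\,l}\,\mu_X^l$ carrying a definite fraction $C'$ of its weight on $\sqrt{n}<l\leq n$, and (iii) proving concentration of $\mu_{G_j,\alpha}^n$ about this expectation via a Hamming--Lipschitz estimate on $\mathcal{A}(m,G_j)$. This probabilistic step is the heart of the Gromov--Silberman argument and is precisely why the lemma asserts the bound only for \emph{some} $n$ depending on $f$ and $x$ (in the proof, some $l\in(\sqrt{n},n]$), rather than for the prescribed $n\sim\girth(G_j)$ your outline aims at; without it the statement as written cannot be reached.

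Two smaller inaccuracies in the bookkeeping. First, the averaging in (i) is exactly where the non-isometry bites: one must compare $f\circ\beta_{u\to x}$ with $f\circ\beta_{u_0\to x}$, and the correction operator is $\widetilde{A}(x\alpha(\overrightarrow{r})x^{-1})$ for a shortest path $\overrightarrow{r}$ in $G_j$, with norm at most $C^3\diam(G_j)^\sigma\, l(x)^{2\sigma}$ --- the relevant length is $\diam(G_j)$ (plus the word length of the base point $x$, which is absorbed into the $x$-dependent constant), not the word length $\sim n$ of the walk as you suggest. Squaring (energies involve squared norms) and applying the comparison in both directions yields the factor $\diam(G_j)^{4\sigma}$ in \eqref{tp-energy-inequality}, and the diameter-growth hypothesis is consumed precisely in converting this into $l^{8\sigma}\leq n^{8\sigma}$ via $\diam(G_j)\lesssim\girth(G_j)\lesssim n\leq l^2$. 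It has nothing to do with making ``the mixing estimate apply'': the spectral inequality $E_{\mu_{G_j}^n}(\varphi)\leq (2/\lambda)\,E_{\mu_{G_j}}(\varphi)$ holds for all $n$ unconditionally.
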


\begin{proof} 
The proof of Proposition 2.14 in \cite{Silberman1}, which treats the case that the action 
is isometric, mostly works for the non-isometric case. 
For the readers' convenience we outline Silberman's argument, and explain 
how the term $n^{8\sigma}$ comes in. 

The lemma is a consequence of the following statement. 
With probability tending to one as $j\to \infty$, 
the group $\Gamma$ corresponding to $\alpha\in \mathcal{A}(m, G_j)$ has the following 
property: for any affine action $\rho\colon \Gamma\curvearrowright \hilbert$ 
satisfying \eqref{eq:growth-new'}, any $n<\frac{\girth(G_j)}{2}$, any $\rho$-equivariant map 
$f\colon \Gamma \rightarrow \mathcal{H}$ and any $x\in \Gamma$, there exists 
$\sqrt{n} < l\leq n$ such that 
\begin{equation}\label{nstepenergy-upperbound2}
E^{(l)}(f)(x)\lesssim_{C,\sigma, x,\lambda} \diam(G_j)^{4\sigma}\, E(f)(x). 
%E^{(l)}(f)(x)\leq \frac{8\, C^{12}\, \diam(G_j)^{4\sigma}\, l(x)^{8\sigma}}{C'\lambda} E(f)(x), 
\end{equation}
%where $C'$ is a absolute positive constant. 
%there exists $n\gtrsim \sqrt{\girth(G_j)}$ such that \eqref{nstepenergy-upperbound} holds. 
Indeed, choosing $n\simeq \frac{\girth(G_j)}{2}$, we have 
$\diam(G_j)\lesssim n\leq l^2$ by the assumption on $\diam(G_j)$, and therefore 
$$
E^{(l)}(f)(x)\lesssim_{C,\sigma,x,\lambda} l^{8\sigma} E(f)(x). 
$$
Note that $l\gtrsim \sqrt{\girth(G_j)}$; thus $l$ diverges as $j\to \infty$. 

For the time being, fix a member $G_j$ of the expander sequence defining the graph model, 
and denote it by $G = (V,E)$. 
Let $\mu_G$ and $\nu_G$ denote the standard random walk on $G$ 
and the standard probability measure on $V$ given by 
$$
\mu_G(u,v)=\left\{\begin{array}{cl} \frac{1}{\deg(u)} & \mbox{if $(u,v) \in \overrightarrow{E}$,}\\
0 & \mbox{otherwise,} \end{array} \right.\quad \mbox{and}\quad 
\nu_G(u)=\frac{\deg(u)}{2\# E}, 
$$
respectively. 
For a map $\varphi\colon V \rightarrow \hilbert$ and $n\in \N$, 
the {\em $n$-step energy} of $\varphi$ is defined by 
\begin{equation*}
E_{\mu_G^n}(\varphi) = \frac{1}{2} \sum_{u \in V}\nu_G(u) \sum_{v \in V} 
\|\varphi(u)-\varphi(v)\|^2\, \mu_G^n(u\to v). 
\end{equation*} 
Recall \cite[Lemma 2.11]{Silberman1} that we have
\begin{equation}\label{energy-inequality}
E_{\mu_G^n} (\varphi)\leq \frac{2}{\lambda_1(G,\R)} E_{\mu_G} (\varphi) 
\end{equation}
for all maps $\varphi\colon V\rightarrow \hilbert$ and all $n\in \N$. 

Let $\alpha\colon \overrightarrow{E}\rightarrow S$ be an $S$-labelling of $G$, 
and $\Gamma$ the corresponding group. 
Let $\rho\colon \Gamma\curvearrowright \hilbert$ be an affine action, 
and $\widetilde{\rho}\colon F_m\curvearrowright \hilbert$ its lift. 
The strategy in proving \eqref{nstepenergy-upperbound2} is to transplant 
\eqref{energy-inequality} onto $\Gamma$. 
In fact, we may work on $F_m$ instead of $\Gamma$, and so we shall transplant 
\eqref{energy-inequality} onto $F_m$. 
In order to do this, we `push-forward', using $\alpha$, the random walks $\mu_G$ 
and $\mu_G^n$ on $G$ to those on $F_m$ as follows. 

If $u\in V$ and $x\in F_m$ are fixed, $\alpha$ induces a corresponding graph morphism 
$\beta_{u\to x}$ from $G$ to $X=\mathrm{Cay}(F_m, S)$, the Cayley graph of $F_m$ 
with respect to $S$, as follows: 
For $v\in V$, choose a path 
$\overrightarrow{p} = (\overrightarrow{e}_1,\dots, \overrightarrow{e}_l)$ 
from $u$ to $v$ in $G$, and set 
$$
\beta_{u\to x} (v) \defineeq x\, \alpha(\overrightarrow{p}) 
= x\, \alpha(\overrightarrow{e}_1)\cdot \dots \cdot \alpha(\overrightarrow{e}_l). 
$$
To be precise, $\beta_{u\to x}$ is well-defined only on %$\stackrel{\circ}{B}\!(u, g/2)$, 
the set of vertices whose graph distance from $u$ is less than $g/2$, where $g=\girth(G)$. 
%rooted treeと書きたい。
We now define, for $n<g/2$, a random walk $\mu_{G,\alpha}^n$ on $X$ 
%which can be regarded as the push-forward of the $n$-th convolution $\mu_G^n$ 
%of the random walk $\mu_G$ in terms of $\alpha$, 
by 
$$
\mu_{G,\alpha}^n(x\to \cdot) \defineeq \sum_{u\in V} \nu_G(u)\, 
(\beta_{u\to x})_* \mu_G^n(u\to \cdot). 
$$
Note that the average over $V$ is taken in order to produce a random walk 
independent of the individual vertices of $G$. 

We can now transplant \eqref{energy-inequality} onto $F_m$, and it is here that 
something different occurs when the action $\rho$ is non-isometric. 
Suppose $n<g/2$ and let $f\colon F_m\rightarrow \hilbert$ be a 
$\widetilde{\rho}$-equivariant map.\footnote{
Equivalently, $f\colon F_m\rightarrow \hilbert$ is the lift of a $\rho$-equivariant map 
$\Gamma\rightarrow \hilbert$. 
In particular, the map $f\circ \beta_{u\to x}$ is well-defined on the whole vertex set $V$.
}
When $\rho$ is isometric, 
\begin{equation}\label{energy-relation-isometric}
E_{\mu_{G,\alpha}^n}(f)(x) = E_{\mu_G^n}(f\circ \beta_{u_0\to x})(x) 
\end{equation}
holds for a fixed $u_0\in V$. 
Indeed, 
\begin{eqnarray*} 
E_{\mu_{G,\alpha}^n}(f)(x) &=& \frac{1}{2} \sum_{u\in V} \nu_G(u)\, 
\sum_{x'\in F_m} \| f(x) - f(x')\|^2\, [(\beta_{u\to x})_* \mu_G^n(u\to \cdot)] (x')\\ 
&=& \frac{1}{2} \sum_{u\in V} \nu_G(u)\, 
\sum_{v\in V} \| f\circ \beta_{u\to x} (u) - f\circ \beta_{u\to x} (v) \|^2\, \mu_G^n(u\to v). 
\end{eqnarray*} 
If $\rho$ is isometric, then we can replace $\beta_{u\to x}$ by $\beta_{u_0\to x}$ 
in the last expression and get the right-hand side of \eqref{energy-relation-isometric}. 
Now consider the general case that $\rho$ is not necessarily isometric. 
Let $\overrightarrow{p}$ and $\overrightarrow{r}$ be a path from $u_0$ to $v$ and 
a shortest path from $u$ to $u_0$, respectively, and let $\overrightarrow{q}$ denote the path 
from $u$ to $v$ traveling along $\overrightarrow{r}$ and $\overrightarrow{p}$ in this order. 
Then 
\begin{eqnarray*} 
f\circ \beta_{u\to x} (v) &=& f(x\alpha(\overrightarrow{q}))\\ 
&=& \widetilde{\rho}(x\alpha(\overrightarrow{r})x^{-1}) f(x\alpha(\overrightarrow{p}))\\ 
&=& \widetilde{\rho}(x\alpha(\overrightarrow{r})x^{-1}) f\circ \beta_{u_0\to x} (v), 
\end{eqnarray*} 
and therefore 
$$ 
\| f\circ \beta_{u\to x} (u) - f\circ \beta_{u\to x} (v) \| 
\leq \| \widetilde{A}(x\alpha(\overrightarrow{r})x^{-1}) \| 
\| f\circ \beta_{u_0\to x} (u) - f\circ \beta_{u_0\to x} (v) \|, 
$$
where $\widetilde{A}$ is the linear part of $\widetilde{\rho}$. 
Since
\begin{eqnarray*}
\| \widetilde{A}(x\alpha(\overrightarrow{r})x^{-1}) \|
&\leq& \| \widetilde{A}(x) \| \| \widetilde{A}(\alpha(\overrightarrow{r})) \| 
\| \widetilde{A}(x^{-1}) \|\\ 
&\leq& C^3\, l(x)^\sigma\, l(\alpha(\overrightarrow{r}))^\sigma\, l(x^{-1})^\sigma\\ 
&\leq& C^3\, D^\sigma\, l(x)^{2\sigma}, 
\end{eqnarray*}
where $D=\diam(G)$, we obtain 
$$
E_{\mu_{G,\alpha}^n}(f)(x)\leq C^6\, D^{2\sigma}\, l(x)^{4\sigma}
E_{\mu_G^n}(f\circ \beta_{u_0\to x})(x), 
$$
and likewise, 
$$
E_{\mu_G}(f\circ \beta_{u_0\to x})(x)\leq C^6\, D^{2\sigma}\, l(x)^{4\sigma}
E_{\mu_{G,\alpha}}(f)(x). 
$$
Together with \eqref{energy-inequality}, these imply 
\begin{equation}\label{tp-energy-inequality}
E_{\mu_{G,\alpha}^n}(f)(x)\leq \frac{2\, C^{12}\, D^{4\sigma}\, l(x)^{8\sigma}}{
\lambda_1(G,\R)} E_{\mu_{G,\alpha}}(f)(x). 
\end{equation}

In order to conclude \eqref{nstepenergy-upperbound2} 
(provisionally on $F_m$ instead of $\Gamma$), 
we must show that with high probability the random walks $\mu_{G,\alpha}$ and $\mu_{G,\alpha}^n$ 
in \eqref{tp-energy-inequality} can be replaced by $\mu_X$ and $\mu_X^l$, $\sqrt{n}< l\leq n$, 
respectively, where $\mu_X$ is the standard random walk of $X$. 
This will be done by verifying that with high probability the random variables
$\alpha\mapsto \mu_{G,\alpha}$ and $\alpha\mapsto \mu_{G,\alpha}^n$ concentrate on 
their expectations and that these expectations are computed in terms of 
$\mu_X$ and its convolutions. 

We begin with the second issue. 
For $n<g/2$, the expectation $\overline{\mu}_{G,X}^n$ of the random variable 
$\alpha\mapsto \mu_{G,\alpha}^n$ can be computed and expressed as a convex combination 
of $\mu_X^l$, $0\leq l\leq n$: 
%引用をつける。
\begin{equation}\label{expectation-expression}
\overline{\mu}_{G,X}^n = \sum_{l=0}^ n w^{(n)}_{\,\,\,l}\, \mu_X^l, 
\end{equation}
where the weights $w^{(n)}_{\,\,\,l}$ satisfy 
\begin{equation}\label{expectation-concentration}
\sum_{\sqrt{n}<l\leq n} w^{(n)}_{\,\,\,l} \geq C'
\end{equation}
for a certain absolute constant $C'>0$. 

For the first issue, let $j$ get large and observe that the random variables $\mu_{G_j,\sbt}$ 
and $\mu_{G_j,\sbt}^n$, where $n<g_j/2$, 
%with $n$ suitably chosen 
concentrate on their expectations 
$\overline{\mu}_{G_j,X}$ and $\overline{\mu}_{G_j,X}^n$, respectively. 
Indeed, one can verify that the map $\alpha\mapsto \mu_{G_j,\alpha}^n$ is Lipschitz 
with respect to the Hamming distance on $\mathcal{A}(m, G_j)$ with the Lipschitz 
constant depending only on the fixed parameters $d$, $m$. 
Using this fact, one deduces that with probability tending to one as $j\to \infty$,  
%\begin{equation}\label{concentration_1}
$$
\mu_{G_j,\alpha}(x\to x')\leq 2\, \overline{\mu}_{G_j,X}(x\to x')\quad 
\mbox{and}\quad 
\mu_{G_j,\alpha}^n(x\to x')\geq \frac{1}{2}\, \overline{\mu}_{G_j,X}^n(x\to x') 
$$
hold for all $x,x'\in X$. 
%\end{equation}
%where $n=\min \{ C_{d,k}\log \# V_j, g_j/2 \}$, $g_j=g_j$. 
%Silbermanの元の証明をチェックしておきたい。

Now for any $\widetilde{\rho}$-equivariant map $f\colon F_m\rightarrow \hilbert$, we obtain 
\begin{equation*} 
E_{\mu_{G_j,\alpha}}(f)(x) \leq 2\, E_{\overline{\mu}_{G_j,X}}(f)(x) = 2\, E_{\mu_X}(f)(x)
\end{equation*}
and 
\begin{eqnarray*} 
E_{\mu_{G_j,\alpha}^n}(f)(x) &\geq& \frac{1}{2}\, E_{\overline{\mu}_{G_j,X}^n}(f)(x) 
\geq \frac{1}{2} \sum_{\sqrt{n}<l\leq n} w^{(n)}_{\,\,\,l}\, E_{\mu_X^l}(f)(x)\\ 
%&\geq& \frac{1}{2} \left( \sum_{\sqrt{n}<l\leq n} w^{(n)}_{\,\,\,l} \right) 
%\min_{\sqrt{n}<l\leq n} E_{\mu_X^l}(f)\\ 
&\geq& \frac{C'}{2} \min_{\sqrt{n}<l\leq n} E_{\mu_X^l}(f)(x). 
\end{eqnarray*}
Together with \eqref{tp-energy-inequality}, these imply that there exists $\sqrt{n}<l\leq n$ 
(which depends on $f$ and $x$) such that 
$$
E_{\mu_X^l}(f)(x)\leq \frac{8\, C^{12}\, D_j^{4\sigma}\, l(x)^{8\sigma}}{C' \lambda} 
E_{\mu_X}(f)(x). 
$$

Now let $f\colon \Gamma\rightarrow \hilbert$ be a $\rho$-equivariant map and set $\widetilde{f}=f\circ \pi$. 
Let $x\in \Gamma$ and choose $\widetilde{x}\in \pi^{-1} (x)\subset F_m$ so that $l(\widetilde{x})=l(x)$. 
Since the ball of radius less than $g_j/2$ with center $\widetilde{x}$ in $X$ is isometrically
isomorphic to that of the same radius with center $x$ in $\mathrm{Cay}(\Gamma, S)$, 
the above inequality (for $\widetilde{f}$, $\widetilde{x}$) implies 
$$
E^{(l)}(f)(x)\leq \frac{8\, C^{12}\, D_j^{4\sigma}\, l(x)^{8\sigma}}{C'\lambda} E(f)(x), 
$$
that is, \eqref{nstepenergy-upperbound2}. 
\end{proof}

Theorem \ref{main_fixed_point_theorem2} now follows by combining Theorem \ref{existence}, 
Lemma \ref{growth} and Lemma \ref{nongrowth}. 

\section*{Appendix}

Let $\Gamma$ be a finitely generated group equipped with a finite, symmetric generating set $S$, 
and let $\rho\colon \Gamma\curvearrowright \mathcal{H}$ be an affine action, 
where $\mathcal{H}$ is a Hilbert space. 
In \S 3, we referred to the following fact: if $\rho(\Gamma)$ has no fixed point, then energy minimization 
coupled with scaling ultralimit argument produces a nonconstant map from $\Gamma$ to a (possibly new) 
Hilbert space $\mathcal{H}'$ which is equivariant with respect to a (possibly new) affine action 
$\rho'\colon \Gamma\curvearrowright \mathcal{H}'$ and minimizes the local energy at a point. 
While this fact would not be useful for our purpose of proving Theorem \ref{existence} 
as we observed that we would not be able to conclude the resulting map is harmonic, 
it might be so in other circumstances. 
Therefore, we shall verify the above fact by proving the following

\begin{Proposition}\label{minimizer}
Let $\Gamma$ be a finitely generated group equipped with a finite, symmetric generating set $S$, 
and let $\rho\colon \Gamma\curvearrowright \mathcal{H}$ be an affine action, 
where $\mathcal{H}$ is a Hilbert space. 
Suppose that $\rho(\Gamma)$ has no fixed point. 
Fix $x\in \Gamma$. 
Then there exist a (possibly new) affine action $\rho'\colon \Gamma\curvearrowright \hilbert'$, 
where $\hilbert'$ is a (possibly new) Hilbert space, 
and a nonconstant $\rho'$-equivariant map $f\colon \Gamma\rightarrow \hilbert'$ 
minimizing the local energy at $x$. 
If $\rho$ satisfies \eqref{eq:growth-new'} for some $C>0$ and $\sigma\geq 0$, then $\rho'$ 
also satisfies \eqref{eq:growth-new'} for the same $C$, $\sigma$. 
\end{Proposition}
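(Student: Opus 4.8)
The plan is to realize the desired minimizer as a scaling ultralimit of a minimizing sequence of $\rho$-equivariant maps, renormalized so as to keep the local energy at $x$ equal to one and centered so as to keep the images within uniformly bounded distance of a fixed basepoint. First I would set up the variational problem: among all $\rho$-equivariant maps $f\colon \Gamma\to\hilbert$, a map is determined by the single vector $f(x)\in\hilbert$, and the local energy $E(f)(x)=\tfrac{1}{2\#S}\sum_{s\in S}\|f(x)-\rho(xs x^{-1})f(x)\|^2$ (after using equivariance to rewrite $f(xs)=\rho(xsx^{-1})(f(x))$, say with $x=e$ for notational simplicity) is a nonnegative quadratic-plus-affine function of $f(x)$. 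The infimum $E_0:=\inf_f E(f)(x)$ is therefore either attained on $\hilbert$ — in which case we take $\rho'=\rho$, $\hilbert'=\hilbert$ and are done, provided the attained value is positive — or it is a genuine infimum not attained, which is the interesting case.

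Next I would argue that $E_0>0$: if $E_0=0$ then along a minimizing sequence $\mathbf{v}_k:=f_k(e)$ the quantities $\|\mathbf{v}_k-\rho(s)\mathbf{v}_k\|$ tend to zero for every $s\in S$ (after reducing to $x=e$), and a standard argument — the $\mathbf{v}_k$ form an "almost-fixed-point" sequence — would let one extract, via a scaling/ultralimit construction, an honest fixed point of the $\Gamma$-action, contradicting the hypothesis that $\rho(\Gamma)$ has no fixed point. Concretely: rescale the norm so that $\sup_k\|\mathbf{v}_k\|$ is normalized, pass to the ultralimit Hilbert space $\hilbert^\omega$ built from $(\hilbert,\|\cdot\|,\mathbf{v}_k)$ (using that the $\rho(\gamma)$ are bounded operators, with norm controlled by $C\,l(\gamma)^\sigma$ by \eqref{eq:growth-new'}, so they pass to the ultralimit), and observe the class of $(\mathbf{v}_k)$ is fixed by the limiting action, which still has no fixed point only if the original did not — so one must instead conclude $\rho(\Gamma)$ does have a fixed point, the contradiction. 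Hence $E_0>0$.

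Now for the main construction in the non-attained case: pick a minimizing sequence $f_k$ with $E(f_k)(x)\to E_0$, and I would first translate each $f_k$ by an element of $\hilbert$ (which changes $f_k$ into another $\rho$-equivariant map only if the action is linear — in general one instead replaces $\rho$ by the conjugate action $\rho^{(k)}(\gamma)(\mathbf{w}):=\rho(\gamma)(\mathbf{w}+\mathbf{c}_k)-\mathbf{c}_k$, which has the same linear part $A(\gamma)$, hence still satisfies \eqref{eq:growth-new'} with the same $C,\sigma$) so that $f_k(x)=0$, and then normalize the Hilbert norm so that $E(f_k)(x)=1$. Because $E_0>0$ the rescaling factors stay bounded away from $0$ and $\infty$. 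Form the ultralimit $\hilbert':=\lim_\omega(\hilbert,\|\cdot\|_k,0)$ over a nonprincipal ultrafilter $\omega$; the basepoint-$0$ condition guarantees the sequences $(f_k(\gamma))_k$ have uniformly bounded norm for each fixed $\gamma$ (their norms are controlled by the energy bound together with $\|A(\gamma)\|\le C\,l(\gamma)^\sigma$ and the triangle inequality along a word for $\gamma$), so $f(\gamma):=[(f_k(\gamma))_k]\in\hilbert'$ is well-defined; the affine maps $\rho^{(k)}(\gamma)$ converge in the ultralimit to affine maps $\rho'(\gamma)$ with linear part the ultralimit of $A(\gamma)$, so $\|A'(\gamma)\|\le C\,l(\gamma)^\sigma$ and $\rho'$ is an affine action; $f$ is $\rho'$-equivariant; and by lower/upper semicontinuity of the quadratic energy under the ultralimit one gets $E(f)(x)=\lim_\omega E(f_k)(x)=1>0$, in particular $f$ is nonconstant, and $E(f)(x)=E_0$ (after matching normalizations) shows $f$ minimizes the local energy at $x$ among $\rho'$-equivariant maps.

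\textbf{Main obstacle.} The delicate point is the last step — ensuring that the ultralimit map is genuinely a \emph{minimizer} for the new action $\rho'$, not merely a map achieving the infimum of the old functional. One must check that the infimum of the local energy over $\rho'$-equivariant maps is not strictly smaller than $E_0$; this follows because any $\rho'$-equivariant competitor, determined by a single vector in $\hilbert'$, can be approximated by $\rho^{(k)}$-equivariant maps in $\hilbert$ whose energies converge, so the two infima coincide. Keeping track of the centering vectors $\mathbf{c}_k$ and verifying that $\|\mathbf{c}_k\|$ does not blow up (relative to the rescaled norm) so that the ultralimit operators remain honest bounded affine maps is the one genuinely technical verification; the growth bound \eqref{eq:growth-new'}, which is preserved exactly under conjugation since it involves only the linear part $A(\gamma)$, is what makes this work and is the reason the conclusion retains the same $C$ and $\sigma$.
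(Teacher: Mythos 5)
There is a genuine gap, and it is the central one: your claim that $E_0>0$ follows from the absence of a fixed point is false, and the case $E_0=0$ is precisely the hard case of this proposition. The hypothesis that $\rho(\Gamma)$ has no fixed point does not prevent the existence of almost-fixed points, i.e.\ of vectors $\mathbf{v}$ with $\max_{s\in S}\|\rho(s)(\mathbf{v})-\mathbf{v}\|$ arbitrarily small (think of an isometric action of a group without property (T) that has almost invariant but no invariant vectors). Your proposed contradiction argument does not work: the ultralimit of an almost-fixed-point sequence is a fixed point of the \emph{new} action $\rho_\infty$ on the \emph{new} space $\hilbert_\infty$, and this gives no information about fixed points of the original $\rho$ on $\hilbert$ --- the implication in your sentence ``which still has no fixed point only if the original did not'' runs in the wrong direction. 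Once $E_0=0$ is admitted as a genuine possibility, your construction breaks down at exactly the spot you flag as the ``main obstacle'': after rescaling the norm so that $E(f_k)(x)=1$, the rescaling factors blow up, and a competitor in $\hilbert'$ at bounded rescaled distance from the basepoint corresponds to vectors $\mathbf{w}_j$ with $\|\mathbf{w}_j-\mathbf{v}_j\|\to 0$ in the original norm, whose (rescaled) energy could a priori be much smaller than $1$; so the limit map, though nonconstant, need not be a minimizer for $\rho'$.

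The missing idea is a variational-principle selection of the basepoints. Setting $\delta(\mathbf{v})=\max_{s\in S}\|\rho(s)(\mathbf{v})-\mathbf{v}\|$ (strictly positive, with infimum $0$ when $E_0=0$), one applies the standard Ekeland-type lemma --- for a strictly positive continuous $\varphi$ on a complete metric space there is $y$ with $d(z,y)\leq\varphi(y)\Rightarrow\varphi(z)\geq\tfrac12\varphi(y)$ --- to $j\delta$, obtaining $\mathbf{v}_j$ such that $\|\mathbf{w}-\mathbf{v}_j\|\leq j\,\delta(\mathbf{v}_j)$ forces $\delta(\mathbf{w})\geq\tfrac12\delta(\mathbf{v}_j)$. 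Taking the ultralimit of $\bigl(\hilbert,\tfrac{1}{\delta(\mathbf{v}_j)}\|\cdot\|,\mathbf{v}_j\bigr)$ then yields an action $\rho_\infty$ whose displacement function is bounded below by $\tfrac12$ everywhere, hence with strictly positive energy infimum; one then runs the (correct) $E_0>0$ half of your argument for $\rho_\infty$. Your Case-$1$-type construction and your observation that the growth bound \eqref{eq:growth-new'} passes to the ultralimit because it concerns only the linear parts are both fine; it is the reduction of $E_0=0$ to $E_0>0$ via this choice of $\mathbf{v}_j$ that is absent and cannot be replaced by a contradiction argument.
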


Before proceeding to the proof, we review the definitions of ultrafilter and the ultralimit 
of a sequence of metric spaces. 

A nonempty subset $\omega \subset 2^\N$ is called an {\em ultrafilter} 
on $\N$ if it satisfies the following conditions:
\begin{enumerate}
\renewcommand{\theenumi}{\roman{enumi}}
\renewcommand{\labelenumi}{(\theenumi)}
\item $\emptyset \notin \omega$.
\item $A\in \omega,\,\, A\subset B \quad \Rightarrow\quad
B\in \omega$.
\item $A,B\in \omega\quad \Rightarrow\quad A\cap B \in \omega$.
\item For any subset $A\subset \N$, $A\in \omega$ or
$\N\setminus A\in \omega$.
\end{enumerate}
An ultrafilter $\omega$ on $\N$ is called {\em non-principal} 
if it satisfies also

\smallskip\noindent
\quad\, (v) For any finite subset $F\subset \N$, $F\notin \omega$
(hence, $\N\setminus F\in\omega$).

\smallskip
Let $\omega$ be a non-principal ultrafilter on $\N$. 
Let $(a_j)_{j=1}^\infty\subset \R$ be a sequence of real numbers. 
We call $\alpha\in \R$ an {\em $\omega$-limit} of $(a_j)$ and write 
$\omega\mbox{-}\lim_j a_j = \alpha$ if 
$\{ j\in \N \mid |a_j-\alpha|<\varepsilon \}\in \omega$ holds for any $\varepsilon >0$. 
Let $(Y_j, d_j, o_j)$ be a sequence of metric spaces with base point. 
On the set of sequences $(y_j)$, 
where $y_j\in Y_j$ and $d_j(o_j, y_j)$ is bounded independent of $j$, 
consider the equivalence relation $[(y_j) \sim (z_j)\,\,\Leftrightarrow
\,\, \omega\mbox{-}\lim_j d_j(y_j, z_j) = 0]$,
and denote the equivalence class of $(y_j)$ by 
$y_\infty = \omega\mbox{-}\lim_j y_j$. 
Let $Y_\infty$ denote the set of equivalence classes, and endow it with 
the metric $d_\infty (y_\infty, z_\infty)
= \omega\mbox{-}\lim_j d_j(y_j, z_j)$.
One writes $(Y_\infty, d_\infty, o_\infty) = \omega\mbox{-}\lim_j (Y_j, d_j, o_j)$, 
called the {\em $\omega$-limit} of $(Y_j, d_j, o_j)$. 
It is known that the metric space $(Y_\infty, d_\infty)$ is necessarily complete. 

\bigskip\noindent
{\em Proof of Proposition \ref{minimizer}}\,\, 
We shall follow \cite{Silberman2} and \cite{Kondo} which treat the case that 
the action is isometric. 

Fix a non-principal ultrafilter $\omega$ on $\N$. 
We divide the proof into two cases, according to whether $E_0 := \inf E(f)(x)$ 
is strictly positive or not, where the infimum is taken over all $\rho$-equivariant maps 
$f\colon \Gamma\rightarrow \hilbert$. 

\smallskip\noindent
{\bf Case 1.}\quad The case that $E_0>0$. 
  
This is a simpler case, and we only outline the argument. 
Let $\{ f_j \}_{j=1}^\infty$ be a sequence of $\rho$-equivariant maps 
$\Gamma\rightarrow \hilbert$ such that $E(f_j)(x) \searrow E_0$. 
Set $\mathbf{v}_j = f_j(x)$ and define $(\hilbert_\infty, \|\cdot\|_\infty, \mathbf{v}_\infty) 
= \omega\mbox{-}\lim_j (\hilbert, \|\cdot\|, \mathbf{v}_j)$. 
Then an affine action $\rho_\infty\colon \Gamma\curvearrowright \hilbert_\infty$ 
is induced and satisfies \eqref{eq:growth-new'}. 
Define a map $f_\infty\colon \Gamma\rightarrow \hilbert_\infty$ by 
$f_\infty(y) = \omega\mbox{-}\lim_j f_j(y)$ for $y\in \Gamma$. 
Then $f_\infty$ is $\rho_\infty$-equivariant, and 
$$
E(f_\infty)(x) = \omega\mbox{-}\lim_j E(f_j)(x) = E_0; 
$$
in particular, $f_\infty$ is nonconstant. 
On the other hand, one can verify that $E(g)(x)\geq E_0$ for all $\rho_\infty$-equivariant 
maps $g\colon \Gamma\rightarrow \hilbert_\infty$. 
Thus, $f_\infty$ minimizes the local energy at $x$. 

%Case1の証明を厳密にチェックする。

\smallskip\noindent
{\bf Case 2.}\quad The case that $E_0=0$.

Define $\delta\colon \hilbert\rightarrow \R_{\geq 0}$ by 
$\delta(\mathbf{v}) = \max_{s\in S} \| \rho(s)(\mathbf{v}) - \mathbf{v} \|$. 
While $\delta>0$ since $\rho(\Gamma)$ has no fixed-point, we have 
$\inf_{\mathbf{v}\in \hilbert} \delta(\mathbf{v}) = 0$; indeed, 
\begin{eqnarray*}
E(f)(x) &=& \frac{1}{2\# S} \sum_{s\in S} \| f(xs)-f(x) \|^2\\ 
&=& \frac{1}{2\# S} \sum_{s\in S} \| \rho(x) \{ \rho(sx^{-1}) (f(x)) 
- \rho(x^{-1})(f(x)) \} \|^2, 
\end{eqnarray*}
which is clearly comparable to $\delta(\rho(x^{-1})(f(x)))^2$. 

In order to proceed, we need the following elementary fact: let $Y$ be a complete metric space 
and $\varphi\colon Y\rightarrow \R$ a strictly positive continuous function.
Then there exists $y\in Y$ such that 
$d_Y(z,y)\leq \varphi(y)\,\, \Rightarrow\,\, \varphi(z)\geq \frac{1}{2}\varphi(y)$. 
Let $j\in \N$ and apply this fact to the function $j\delta\colon \hilbert\rightarrow \R$. 
Then we get $\mathbf{v}_j\in \hilbert$ such that 
$\| \mathbf{w} - \mathbf{v}_j \|\leq j\delta(\mathbf{v}_j)\,\, \Rightarrow\,\, 
\delta(\mathbf{w})\geq \frac{1}{2}\delta(\mathbf{v}_j)$. 
Now let 
$(\hilbert_\infty, \|\cdot\|_\infty, \mathbf{v}_\infty) = \omega\mbox{-}\lim_j 
\left(\hilbert, \frac{1}{\delta(\mathbf{v}_j)}\|\cdot\|, \mathbf{v}_j \right)$. 

We shall define an affine action $\rho_\infty\colon \Gamma\curvearrowright 
\mathcal{H}_\infty$. 
Let $\mathbf{w}_\infty\in \mathcal{H}_{\infty}$ and write $\mathbf{w}_\infty 
= \omega\mbox{-}\lim_j \mathbf{w}_j$. 
By definition, there exists $M>0$ such that 
$\|\mathbf{w}_j - \mathbf{v}_j\|\leq M \delta(\mathbf{v}_j)$ for all $j\in \N$. 
Then 
\begin{eqnarray*}
\| \rho(s)(\mathbf{w}_j) - \mathbf{v}_j \|
&\leq& \| \rho(s)(\mathbf{w}_j) - \rho(s)(\mathbf{v}_j) \| 
+ \| \rho(s)(\mathbf{v}_j) - (\mathbf{v}_j) \|\\ 
&\leq& C\, \| \mathbf{w}_j - \mathbf{v}_j \| + \delta(\mathbf{v}_j)\\ 
&\leq& (CM+1)\, \delta(\mathbf{v}_j), 
\end{eqnarray*}
where $C= \| A(s) \|$. 
It follows that $\omega\text{-}\lim_j \rho(s)(\mathbf{w}_j)$ exists, 
and it is easy to verify that this limit is independent of the choice of $\mathbf{w}_j$. 
Hence, by defining $\rho_\infty(s)(\mathbf{w}_\infty) = \omega\text{-}\lim_j \rho(s)(\mathbf{w}_j)$, 
we obtain a well-defined map 
$\rho_\infty(s)\colon \mathcal{H}_{\infty}\rightarrow \mathcal{H}_{\infty}$, 
which is clearly $C$-Lipschitz. 
It is also easy to see that the affineness, that is, the property of preserving internally dividing 
points, of $\rho(s)$ is inherited by $\rho_\infty(s)$. 
%Since each $\rho(s)$ preserves affine lines in $\mathcal{H}$ and affine lines 
%in $\mathcal{H}_{\infty}$ are ultralimits of those in $\mathcal{H}$, 
%$\rho_\infty(s)$ also preserves affine lines in $\mathcal{H}_{\infty}$. 
%Therefore, $\rho_\infty(s)$ is an affine transformation. 
Let $\gamma\in \Gamma$ and write $\gamma=s_1\dots s_l$, where $s_1,\dots, s_l \in S$. 
Let $\mathbf{w}_{\infty} = \omega\text{-}\lim_j \mathbf{w}_j\in \mathcal{H}_{\infty}$. 
Then the ultralimit of $\rho(\gamma)(\mathbf{w}_j) = \rho(s_1) \dots \rho(s_l)(\mathbf{w}_j)$ 
exists and equals to $\rho_\infty(s_1) \dots \rho_\infty(s_l) (\mathbf{w}_{\infty})$. 
Thus, defining 
$\rho_\infty(\gamma) (\mathbf{w}_{\infty}) = \omega\text{-}\lim_j \rho(\gamma)(\mathbf{w}_j)$, 
we have $\rho_\infty(\gamma) = \rho_\infty(s_1)\dots \rho_\infty(s_l)$ and obtain 
an affine action $\rho_\infty\colon \Gamma\curvearrowright 
\mathcal{H}_\infty$. 
%$\rho_\infty$ of $\Gamma$ on $\mathcal{H}_{\infty}$. 
It is clear that if $\rho$ satisfies \eqref{eq:growth-new'}, then $\rho_\infty$ also satisfies 
\eqref{eq:growth-new'} with the same constants. 

We now verify that $\delta_\infty\geq \frac{1}{2}$, where $\delta_\infty$ is the 
function $\delta$ with respect to $\rho_\infty$. 
To do so, take any $\mathbf{w}_\infty = \omega\mbox{-}\lim_j \mathbf{w}_j\in \hilbert_\infty$, 
so that $\| \mathbf{w}_j - \mathbf{v}_j \|\leq M\, \delta(\mathbf{v}_j)$ for some $M>0$, 
and set $A_s := \{ j\in \N \mid \| \rho(s)(\mathbf{w}_j) - \mathbf{w}_j \|\geq 
\frac{1}{2} \delta(\mathbf{v}_j) \}$ for $s\in S$. 
For $j> M$, $\| \mathbf{w}_j - \mathbf{v}_j \|\leq j\, \delta(\mathbf{v}_j)$, and therefore 
$\delta(\mathbf{w}_j)\geq \frac{1}{2} \delta(\mathbf{v}_j)$, that is, $j\in \cup_{s\in S} A_s$. 
Thus %$\N\setminus \cup_{s\in S} A_s$ is finite, and thus 
$\cup_{s\in S} A_s\in \omega$. 
But this means $A_s\in \omega$ for some $s\in S$. 
Therefore, $\| \rho_\infty(s)(\mathbf{w}_\infty) - \mathbf{w}_\infty \|_\infty
\geq \frac{1}{2}$, and $\delta_\infty\geq \frac{1}{2}$. 
We thus recover the situation of Case 1.
\hfill{$\square$}

\end{document}